\documentclass{amsart}
\usepackage{amsmath, amsfonts,amssymb,amsthm,amscd,latexsym}
\usepackage{cite}
\usepackage{array}
\usepackage{mathrsfs}
\usepackage{color}
\usepackage{enumerate}
 \usepackage{lmodern}
\newtheorem{thm}{Theorem}[section]
\newtheorem{theorem}[thm]{Theorem}
\newtheorem{lem}[thm]{Lemma}

\newtheorem{cor}[thm]{Corollary}

\newtheorem{prop}[thm]{Proposition}
\newtheorem{definition}[thm]{Definition}

\newtheorem{rem}[thm]{Remark}

\newtheorem{defi}[thm]{Definition}

\newcommand{\cE}{{\mathcal E}}

\newcommand{\cH}{{\mathcal H}}

\newcommand{\cM}{{\mathcal M}}
\newcommand{\cN}{{\mathcal N}}

\newcommand{\cP}{{\mathcal P}}

\newcommand{\cR}{{\mathcal R}}

\newcommand{\cZ}{{\mathcal Z}}

 \usepackage{color}
 \newcommand{\norm}[1]{\left\lVert#1\right\rVert}
\usepackage{mathtools}
\usepackage{hyperref}                   % Reference
\hypersetup{colorlinks,%                % Reference color setup
    linkcolor=blue,%
    citecolor=blue}
\usepackage{etoolbox}

\usepackage{tikz}
\usetikzlibrary{positioning}
\usepackage{caption}
 \usepackage{float}

\usetikzlibrary{petri}
\tikzset{place/.style = {circle, draw=blue!50, fill=blue!20, thick, minimum size=0.6cm},
    transition/.style = {rectangle, draw=black!50, fill=black!20, thick, minimum width=0.6cm,
                        minimum height = 1cm},
    pre/.style =    {<-, semithick},
    post/.style =   {->, semithick}
}
\usetikzlibrary{decorations.pathreplacing}
\begin{document}

 \title[Isometric embeddings of noncommutative  $L_p$-spaces]{Isometric embeddings of noncommutative  $L_p$-spaces into noncommutative symmetric spaces}

\author[J. Huang]{Jinghao Huang}
\address{Institute for  Advanced Study in  Mathematics of HIT, Harbin Institute of Technology, Harbin 150001, China}
\email{{\color{blue}jinghao.huang@hit.edu.cn}}

\author[M. Junge]{Marius Junge}
\address{Department of Mathematics, University of Illinois at Urbana-Champaign, Urbana, IL 61801,
USA}
\email{\color{blue} mjunge@illinois.edu}

 \author[F. Sukochev]{Fedor Sukochev}
  \author[D. Zanin]{Dmitriy Zanin}
 \address{School of Mathematics and Statistics, The University of New South Wales, Kensington, NSW 2052, Australia}
 \email{{\color{blue}f.sukochev@unsw.edu.au}}
 \email{{\color{blue}d.zanin@unsw.edu.au}}

 \thanks{J. Huang was supported the NNSF of China (No.12031004 and 12301160). F. Sukochev and D. Zanin were supported by the ARC}

\subjclass[2010]{46B20,  46L10, 46E30, 46L52.  }

\keywords{isometric embedding; noncommutative $L_p$-subspace; noncommutative  symmetric  space.}

\begin{abstract}
 We establish a noncommutative version of a familiar  Johnson--Maurey--Schechtman--Tzafriri Theorem, by showing that 
 for any $0 <p<2$ and a (not necessarily semifinite)  von Neumann algebra $\cM$ on a separable Hilbert space, if 
a symmetric quasi-Banach function space $E(0,1) $ containing the function $t\mapsto t^{-1/p}$, $0<t\le 1, $ then    there exists a noncommutative probability space  $(\cN,\sigma )$ such that 
 $L_p(\cM )$ is isometric to a subspace of 
  $E(\cN,\sigma)$. 
  In particular, this answers two questions  raised  by Randrianantoanina in 2006. 
\end{abstract}

\maketitle
 \section{Introduction} 

Identifying  subspaces  of Banach spaces lies at the core of the study of Banach space geometry, 
 which only has been achieved for very special classical function spaces.  
 The discovery of $q$-stable random variables of Kadec\cite{Kadec} resulted in the embedding   $\ell_p\hookrightarrow L_q(0,1)$ when $1\le q<  p<2$, where $X\hookrightarrow Y$ means that a Banach space $X$ is isomorphic to a subspace of another Banach space $Y$. 
In 1966,  Bretagnolle, Dacunha-Castelle and Krivine \cite{BDK,BD} obtained the following result  by using sequences of independent $r$-stable random variables:% and ultrapowers of Banach spaces:
  \begin{quote}
  for $1\le p<r<2$, the space $L_r  (0,1)\hookrightarrow L_{p}(0,1)$   isometrically.
  \end{quote}
 %Finite dimensional results were obtained with combinatorical tools
%by Kwapien and Sch\"{u}tt\cite{KS1,KS2} and extended by Raynaud and  Sch\"{u}tt \cite{RS}.
%  Johnson and Schechtman \cite{JS} studied the
%concrete problem of embedding $\ell^n_q$ into $\ell^m_p$
% nearly isometrically and started
%the investigation of the minimal number $m(\varepsilon; q; p; n)$ for $(1+ \varepsilon)$-isomorphic
%embeddings. 
%Bourgain, Lindenstrauss and Milman \cite{BLM} invented the iteration method and they were able to extend the results to arbitrary finite
%dimensional subspaces of $L_p$. 
%Moreover, they clarified the behaviour of the
%function $m(\varepsilon; q; p; n)$  for all values $0 < p < q < 2$. Talagrand \cite{LT,Ta}
% improved the estimates for embeddings of arbitrary $n$-dimensional subspaces
%of $L_p$ into $\ell_p^m$.  {\color{red}It is not necessary.  I leave it with you to decide whether we should delete it. }
  
A far-reaching  extension of  Bretagnolle, Dacunha-Castelle and Krivine's  result is  due to Johnson et. al.\cite[Section 8]{JMST} (see also \cite[Theorem 2.f.4]{LT2} and see \cite{GM} for the case when $0< p<2$).
 \begin{theorem}\label{JMST}
 Let $1<p<2$ and let $E(0,1)$ be a symmetric function space containing $t^{-1/p}$. Then, $L_p(0,\infty)\hookrightarrow E(0,1)$  isometrically. 
 \end{theorem}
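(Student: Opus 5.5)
The idea is to realize $L_p(0,\infty)$ isometrically inside $E(0,1)$ through $p$-stable random variables whose distribution is matched, up to a symmetric rearrangement, with the extremal function $t^{-1/p}$. Let $\theta$ be a symmetric $p$-stable random variable on $(0,1)$, with $\mathbb E e^{is\theta}=e^{-|s|^p}$. Its tail satisfies $\lambda(|\theta|>u)\sim c_p u^{-p}$ as $u\to\infty$, so $\theta^*(t)\sim c\,t^{-1/p}$ near $0$, and $\theta^*$ is bounded away from $0$. Since $t^{-1/p}\in E$ and $E$ is symmetric, $\theta\in E(0,1)$.

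For a simple function $f=\sum_k a_k\chi_{A_k}$ in $L_p(0,\infty)$ with disjoint $A_k$ of finite measure, define
\[
Tf=\sum_k a_k\,|A_k|^{1/p}\,\theta_k ,
\]
where $(\theta_k)$ are independent copies of $\theta$ on $(0,1)$. By $p$-stability, $Tf$ has the same distribution as $\|f\|_p\,\theta$. Hence $Tf$ is equimeasurable with $\|f\|_p\theta$, and symmetry of $E$ gives $\|Tf\|_E=\|f\|_p\|\theta\|_E$. After normalizing by $\|\theta\|_E$, $T$ is an isometry on simple functions.

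Next check that $T$ is well defined and linear. Under refinement of partitions the formula is consistent in distribution, but linearity needs an honest construction. I would define $T$ via an independently scattered $p$-stable random measure $M$ on $(0,\infty)$ with control measure Lebesgue, realized on $(0,1)$ (a standard Lévy–Kolmogorov construction on a separable space), and set $Tf=\int f\,dM$ for simple $f$. This map is linear, and for every $f$ the distribution of $Tf$ is that of $\|f\|_p\theta$. Since simple functions are dense in $L_p(0,\infty)$ and $T$ is isometric into the quasi-Banach space $E$, it extends to an isometry on all of $L_p(0,\infty)$. Completeness of $E$ handles the extension, and because $T$ is isometric the extension is still distribution-preserving in the limit, since convergence in $E$ implies convergence in measure.

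The main obstacle is the membership $\theta\in E$ when $E$ is only assumed to contain $t^{-1/p}$. The tail asymptotics give $\theta^*(t)\le C t^{-1/p}$ near $0$, and $\theta^*$ is bounded on $[\delta,1]$. So $\theta^*\le C't^{-1/p}$ on $(0,1]$, using $t^{-1/p}\ge1$ there, and the ideal property of $E$ finishes. This requires the precise tail estimate for stable laws, which I would quote: $\lambda(|\theta|>u)\le Cu^{-p}$ for all $u>0$. The other delicate point is the restriction $p<2$, which is exactly where nondegenerate symmetric $p$-stable laws exist. For $p\le1$ the same argument works verbatim in the quasi-Banach setting, which is the \cite{GM} extension.
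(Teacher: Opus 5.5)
Your proof is correct; it is the classical stable-variable argument of Bretagnolle--Dacunha-Castelle--Krivine and JMST. One slip: ``$\theta^*$ is bounded away from $0$'' should read ``$\theta^*$ is bounded on $[\delta,1]$'', which is what you use later. In fact $\theta^*(t)\to0$ as $t\to1$, and your global bound $\lambda(|\theta|>u)\le Cu^{-p}$ already gives $\theta^*(t)\le (C/t)^{1/p}$ on all of $(0,1)$.

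The paper does not prove Theorem~\ref{JMST} directly. It quotes it, and in the remark after the corollary recovers it as the commutative case of Theorem~\ref{main}, by a different mechanism:
\begin{enumerate}
\item The tensor trick $f\mapsto f\otimes d_p$, $d_p(t)=t^{-1/p}$, gives $\mu(f\otimes d_p)=\|f\|_p\,d_p$. So all images share one distribution up to scaling, but they live on an infinite measure space.
\item The Kruglov operator, $f\mapsto K(f\otimes d_p\otimes r)$, transports this to $(0,1)$, because $K$ preserves equimeasurability. In the general case the paper proves this via the moment formula, Carleman's theorem and approximation (Lemmas~\ref{lemmav1} and~\ref{lemmav}).
\item The image lies in $E$ because interpolating the $L_{p/2}$ and $L_2$ estimates of Proposition~\ref{propbounded} shows that $K$ maps $L_{p,\infty}$ into $L_{p,\infty}(0,1)\subset E$. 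Nonvanishing of $K(d_p\otimes r)$ comes from the lower isomorphic estimate.
\end{enumerate}
So where you use explicit stable tail asymptotics, the paper uses interpolation.

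The two constructions actually produce the same random variables. Since $\int_0^\infty(\cos(a t^{-1/p})-1)\,dt=-c_p|a|^p$, the characteristic function of $K(f\otimes d_p\otimes r)$ is $\exp(-c_p|s|^p\|f\|_p^p)$. Thus the Kruglov map is a Poisson (L\'evy--It\^o type) realization of your integral $\int f\,dM$.

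Your route is shorter and more transparent in the commutative case. However, it rests on independence, an independently scattered stable measure, and stable tail bounds, which have no usable noncommutative analogue, as the introduction points out. The paper's route trades these for structural properties of $K$ that persist for semifinite algebras. Via the Haagerup identity $\mu(t;x)=\|x\|_{L_p}t^{-1/p}$, they also persist for arbitrary von Neumann algebras.
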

 
 Since  von Neumann, Segal and Dixmier initiated the 
  study   of noncommutative $L_p$-spaces\cite{v37,Se,D53}, 
  the geometry of noncommutative $L_p$-spaces has attracted significant attention.
It is reasonable to expect similar albeit established with different techniques results for noncommutative  $L_p$-spaces, which are not isomorphic to any Banach lattices.  
Indeed, in 1967,
 McCarthy~\cite{Mc} proved that, if $1\le p<\infty $, then  the Schatten--von Neumann class $C_p$ embeds into $L_p(0,1)$ if and only if $p=2$.
Arazy and Lindenstrauss \cite{AL} showed that
for  $1\le p<\infty $, 
  $$L_p(0,1)\hookrightarrow C_p \mbox{ if and only if } p=2.$$
The complete isomorphic classification of noncommutative $L_p$-spaces associated with hyperfinite von Neumann algebras   was obtained in \cite{hrs} (see also \cite{S00,S96,S01}).
In particular, if $1\le p<\infty $, then\cite{hrs,S96} 
$$C_p \hookrightarrow  L_p(\cM,\tau) \mbox{ for some   finite  von Neumann algebra 
} (\cM,\tau) \mbox{ if and only if }  p=2. $$
Isometries between noncommutative $L_p$-spaces on semifinite von Neumann algebras have been completely characterized by Yeadon\cite{Yeadon} (see also \cite{Sherman,Watanabe} for the type $III$ case). 
We note that,  $L_p(0,\infty)$ is isometric to $L_p(0,1)$ (see \cite[Part III]{Wojtaszczyk},  \cite[Chapter XII]{Banach} and \cite{S00,S01}) while the Lorentz sequence space $\ell_{p,q}$ is not isomorphic to a  subspace of the noncommutative Lorentz space $L_{p,q}(\cM,\tau)$ over a  noncommutative probability space $(\cM,\tau)$ whenever $1<  p <\infty $, $1\le   q<\infty $ and $  p\ne q $\cite{KS,SS2018,HSSZ}. 
 Therefore, it is natural  to consider the following problem 
\begin{quote}
 \emph{whether a symmetric space on a von Neumann algebra with an infinite trace is isomorphic to a subspace of another symmetric space on 
  a noncommutative probability space. }
\end{quote}

  When $p$ is distinct from $q$, the situation for
isomorphic embedding of a noncommutative $L_p$-space
into a noncommutative $L_q$-space becomes  quite different, %.
%Results concerning  the isomorphic embeddings of  von Neumann--Schatten  classes  $C_p$ ($1<p<2$) into the predual of a von Neumann algebra were obtained in
see e.g. \cite{Junge01,JP,Rand,Xu}.
In general, the existence of such an embedding strongly relies on the underlying algebras. 
It is well-known that $$C_p\not\hookrightarrow C_q$$ whenever $1\le p\ne q<\infty $ and $p\ne 2$, see e.g.  \cite{Arazy81,AL}. 
We emphasize that in the problem stated above, we ask for symmetric space on a finite von Neumann algebra.
 Indeed,   the same argument used in \cite[p.211]{LT2} (see  Remark \ref{Remark} below, see also \cite[Theorem 0.2]{Junge01} for a similar result) yields that, 
for any $1\le q<p <2$, 
$$C_p \hookrightarrow L_q\left (B(\cH)\bar{\otimes} L_\infty (0,\infty ), \tau\otimes \int \right )$$ isometrically.  
A noncommutative version of $r$-stable random variables  was introduced in \cite{Junge01}. Using this notion, it was stated  that \emph{
for any     semifinite von Neumann algebra $\cM$ with a semifinite faithful normal trace $\tau$, there exists a finite von Neumann algebra $\cN$ with a faithful normal tracial state $\sigma$ such that for any $0<q <p<2$, $$L_p  (\cM,\tau)\hookrightarrow L_{q}(\cN,\sigma)$$ isometrically.}
This result was later extended to the setting of type $III$ von Neumann algebras in \cite{Rand}.
In recent works\cite{JSZ,HJSZ},   isomorphic embeddings of noncommutative symmetric spaces into noncommutative $L_p$-spaces were considered. The main result achieved in these papers may be succinctly formulated as follows: 
\emph{for 
 a  symmetric (Banach) function space  $E(0,\infty) $  on $(0,\infty)$ such that $E(0,\infty) \cap L_{\infty}(0,\infty) \neq L_p(0,\infty)\cap L_{\infty}(0,\infty),$  $1\leq p<2 $, and  a semifinite   $\sigma$-finite   von Neumann algebra $(\mathcal{M},\tau)$,
if  $E(0,\infty)\hookrightarrow L_p(0,1)$, then 
there exists a noncommutative probability space $(\cN, \sigma )$ such that}   $$E(\cM,\tau )\hookrightarrow L_p(\cN,\sigma).$$
However, a noncommutative version of Theorem~\ref{JMST} has not been obtained yet and the main goal of the present paper is to study the isometric version of the problem stated above. 
Another motivation of the present paper is a question posed by Randrianantoanina in 2006\cite[p.2]{Rand}: {\it It is a natural question to consider whether or not Theorem 1.1 (i.e. \cite[Theorem 0.3]{Junge01})
can be extended to include general von Neumann algebras which are not necessarily semifinite.}
 The main result of the present paper is  the following   noncommutative version of Theorem~\ref{JMST}, which provides a very simple criterion for a noncommutative symmetric space to have a subspace isometric to a noncommutative $L_p$-space,  and answers Randrianantoanina's question in the affirmative. 
\begin{thm}\label{main}
 
 Let $(\mathcal{M}_0,\tau_0)$ be a  von Neumann algebra on a separable Hilbert space.   
 There exists a probability space $(\mathcal{N},\sigma)$ such that 
  $L_p(\mathcal{M}_0)$  is isometrically to a subspace of  $E(\mathcal{N},\sigma)$  whenever $0<  p<2 $ and   $E(0,1)$ is  a quasi-Banach symmetric function space with 
$t\mapsto t^{1/p}\in E(0,1) $. \end{thm}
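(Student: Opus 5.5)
The plan is to combine three ingredients: Haagerup's reduction to reduce the non-semifinite case to the semifinite one, noncommutative $p$-stable variables to embed the semifinite $L_p$ isometrically into a $p$-stable "Poisson/free-product-type" algebra, and the classical JMST mechanism (Theorem~\ref{JMST}) for the scalar stable distribution in $E(0,1)$.

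\emph{Step 1 (semifinite reduction).} For a von Neumann algebra $\mathcal{M}_0$ on a separable Hilbert space, I would choose a faithful normal state $\varphi$ and form the Haagerup crossed product $\mathcal{R}=\mathcal{M}_0\rtimes_{\sigma^\varphi}\mathbb{Q}_2$ with its increasing sequence of finite subalgebras $\mathcal{R}_n$ and the normal conditional expectations $\mathcal{E}_n$. Haagerup--Junge--Xu then give that $L_p(\mathcal{M}_0)$ is isometric to a ($1$-complemented when $p\ge1$) subspace of an ultraproduct of the $L_p(\mathcal{R}_n)$; more concretely, $\bigcup_n L_p(\mathcal{R}_n)$ is dense in $L_p(\mathcal{R})\supseteq L_p(\mathcal{M}_0)$. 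Since isometric embeddings into $E(\mathcal{N},\sigma)$ survive inductive limits, provided that the target algebras can be chosen compatibly (a single $\mathcal{N}$ containing all the stages, e.g.\ a tensor/free product of countably many copies), it suffices to treat $L_p(\mathcal{M},\tau)$ with $\tau$ a normal faithful semifinite (indeed finite) trace, uniformly in a way compatible with the inclusions. Alternatively, I would pass directly to $L_p(\mathcal{M}_0)\subset L_p(\mathcal{R})$ with $\mathcal{R}$ semifinite (the crossed product by $\mathbb{R}$), which avoids the limit argument and is the route I would try first.

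\emph{Step 2 (stable embedding).} For semifinite $(\mathcal{M},\tau)$, with $\tau$ $\sigma$-finite by separability, I would follow \cite{Junge01}. Take the $p$-stable Lévy measure $\nu(ds)=c_p|s|^{-1-p}ds$ and the Poisson random measure on $(0,\infty)$ of intensity $\tau$-type; then build the noncommutative Poisson process $\mathcal{N}$, the second quantization/Fock-type algebra over $L_2(\mathcal{M}\otimes(\mathbb{R},\nu))$. This algebra is finite with tracial state $\sigma$ and contains $\mathcal{M}$ via a map $x\mapsto \pi(x)$ with a characteristic-function identity
\begin{equation*}
\sigma\bigl(\exp(i\,\mathrm{Re}\,\pi(x))\bigr)=\exp\bigl(-c\,\|x\|_p^p\bigr).
\end{equation*}
This identity forces the distribution of $|\pi(x)|$ under $\sigma$ to be that of $\|x\|_p\,\theta$, where $\theta$ is a fixed symmetric $p$-stable random variable, so that $\mu(\pi(x))=\|x\|_p\,\mu(\theta)$. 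For $0<p<1$ the same construction works since $\nu$ still integrates $\min(1,s^2)$. One must also handle the non-self-adjoint part, via the $2\times2$ matrix trick, with $\mathcal{N}\otimes M_2$ and its normalized trace.

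\emph{Step 3 (landing in $E$).} Because $\mu(\theta)(t)\asymp t^{-1/p}$ near $0$ and $\theta$ has bounded tails at large $t$ in the sense of the decreasing rearrangement on $(0,1)$, the hypothesis $t^{-1/p}\in E(0,1)$ gives $\mu(\theta)\in E(0,1)$. Hence $\|\pi(x)\|_{E(\mathcal{N},\sigma)}=\|x\|_p\,\|\mu(\theta)\|_E$, and after normalizing, $\pi$ is an isometry on a dense set which extends by continuity. The main obstacle is Step 2: verifying that the distribution of $\pi(x)$ is exactly determined by $\|x\|_p$ for every noncommuting $x$, not merely in Fourier terms for self-adjoint $x$. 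The map $\pi$ must also be linear into measurable operators, which requires controlling the convergence of the Poisson integral for $p<1$, and this is where I expect the real work to lie.
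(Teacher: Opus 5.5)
Your proposal has two genuine gaps: the reduction from a general $\mathcal{M}_0$ to the semifinite case fails, and the non-self-adjoint case is left open.

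\textbf{The type III reduction.} Your Step 1 does not go through, and this is the heart of the theorem. The route you say you would try first rests on a false inclusion. Haagerup's $L_p(\mathcal{M}_0)$ is not a subspace of the tracial space $L_p(\mathfrak{M}_0,\tau)$ of the core $\mathfrak{M}_0=\mathcal{M}_0\rtimes_{\sigma^{\varphi}}\mathbb{R}$. Every $x\in L_p(\mathcal{M}_0)$ has $\mu(t;x)=\|x\|_{L_p(\mathcal{M}_0)}\,t^{-1/p}$, so $L_p(\mathcal{M}_0)\cap L_p(\mathfrak{M}_0,\tau)=\{0\}$. The space lives only in $L_{p,\infty}(\mathfrak{M}_0,\tau)$, where a stable process with exponent $\tau(|x|^p)=\infty$ is not defined.

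The $\mathbb{Q}_2$-route stalls exactly at your proviso. The traces $\tau_n$ on the finite algebras $\mathcal{R}_n$ are in general not restrictions of one another. So the isometric inclusions $L_p(\mathcal{R}_n,\tau_n)\hookrightarrow L_p(\mathcal{R}_{n+1},\tau_{n+1})$ are density-weighted maps, not maps induced by trace-preserving inclusions of algebras. No functorial Poisson/stable construction intertwines them, and an ultraproduct does not in general land in a space $E(\mathcal{N},\sigma)$ with $\sigma$ a tracial state.

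The missing idea is that the weak-type picture is an asset. All elements of $L_p(\mathcal{M}_0)$ have the same rearrangement up to the factor $\|x\|_p$. So you need neither a stable process nor finite approximations. You only need a linear map, defined on a space containing $t^{-1/p}$, whose output rearrangement depends only on the input rearrangement. The paper uses $L_{p/2}+L_2$ for this domain, and $L_{p,\infty}$ via interpolation. With $v(z)=K_{\mathcal{M}_1\bar\otimes\mathbb{M}_2}(z\otimes E_{01})$ it gets $\mu\bigl(v(x\otimes\chi_{(0,1)}\otimes r)\bigr)=\|x\|_{L_p(\mathcal{M}_0)}\,\mu\bigl(v(A\otimes d_p\otimes r)\bigr)$ directly, and $t^{-1/p}\in E$ finishes the argument. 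In the semifinite case the paper manufactures the same rearrangement with $x\otimes d_p$. For self-adjoint $x$, $K(x\otimes d_p\otimes r)$ is exactly your noncommutative symmetric $p$-stable variable of scale $\|x\|_p$, so there your Step 2 and the paper agree in substance.

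\textbf{The non-self-adjoint case.} This is the gap you flag yourself, and your mechanism does not close it. An identity for $\sigma\bigl(\exp(i\,\mathrm{Re}\,\pi(x))\bigr)$ only identifies the law of $\mathrm{Re}\,\pi(x)$. That does not determine $\mu(\pi(x))$ when $\pi(x)$ is not normal. For a linear $*$-preserving $\pi$ the identity is even false as written: the exponent would be $\|\mathrm{Re}\,x\|_p^p$ (take $x=ie$ with $e$ a $\tau$-finite projection). The $2\times 2$ trick does not help by itself. The self-adjoint dilation $\begin{pmatrix}0&x\\x^*&0\end{pmatrix}$ gives only a real-linear map. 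The complex-linear choice $x\otimes E_{01}$ has non-self-adjoint image, whose modulus a characteristic function again does not see.

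The paper replaces Fourier transforms by moments. For $x\in(L_1\cap L_\infty)(\mathcal{M},\tau)$, the mixed-moment formula of Lemma~\ref{mixed momenta formula} gives
\[
\sigma\bigl(|v(x)|^{2n}\bigr)=\sum_{\pi\in S_{2n}^{\rm good}}\prod_{V\in\pi}\tau\bigl(|x|^{|V|}\bigr).
\]
This depends only on $\mu(x)$ and obeys the Bell-number growth bound. Carleman's theorem then yields $\mu(v(x))=\mu(v(y))$ whenever $\mu(x)=\mu(y)$. The paper pushes this to $L_{p/2}+L_2$ by approximation, using that $z\mapsto K(z\otimes r)$ is an isomorphic embedding of $L_{p/2}+L_2$ into $L_{p/2}(\mathcal{N},\sigma)$ (Proposition~\ref{propbounded}). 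That approximation step is unavoidable: the $p$-stable targets have no moments of order $\ge p$, so no moment argument can be run on them directly.
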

%We quote the following from the paper  by  Randrianantoanina\cite[p.19]{Rand}: {\it
%We remark that the embedding in   Theorem 3.1  only deals with isomorphism.
%We do not know it one can obtain a similar result for isometric embedding. }
In \cite{Rand}, Randrianantoanina
 posed another  open problem\cite[p.19]{Rand}:  
\begin{quote}Let $M$ and $N$ be von Neumann algebras.
If $M$ is finite and $L^p(N)$ embeds isometrically into $L^q(M)$ for all $0<q<p<2$, is $N$ semi-finite?
\end{quote}
Theorem \ref{main} above provides a negative answer to this problem.

The approach  used in \cite{JSZ,HJSZ} relies on ultra product techniques, which does not go beyond the scope of (commutative and noncommutative) $L_p$-spaces.  
Moreover,  since the noncommutative notion of independence is not uniquely defined  in the noncommutative setting, the approach of $q$-stable random variables used by~ Kadec~\cite{Kadec} is  not applicable in the noncommutative setting.  
Therefore we
prefer to use a  more functional analytic tool, the so-called   Kruglov
operator introduced and studied in \cite{AS04,AS,AS06,AS10}, whose noncommutative version was defined and thoroughly studied   in \cite{JSZ}.

Note that surjective isometries on (commutative and noncommutative) symmetric spaces rather have elementary forms, see e.g. \cite{Kalton_R93,Kalton_R,Zaidenberg,HS,FJ,CMS,S96b,SV,HSZ}, while the injective isometries constructed in this paper are not of elementary forms (see also \cite{Junge01,HJSZ,Rand}).

 \section{Preliminaries}
 \subsection{Symmetric function spaces}

Let $I$ be $(0,1)$ or $(0,\infty)$ and let $L(I)$ denote the space of all  (real)  Lebesgue-measurable functions $x$ on $I.$
For a Lebesgue-measurable function $x$ on $I,$ we define its {\it distribution function} by the formula
$$d_x(s)=m(\{t:\ x(t)>s\}),\quad s\in\mathbb{R},$$
where $m$ stands for Lebesgue measure. 
Denote by $S(I)$ the subalgebra of $L(I)$
 consisting of all functions $x$ such that $d_{|x|}(s) < \infty$  for some 
 $s > 0$.
 
Two measurable functions $x$ and $y$ are called {\it equimeasurable} (written, $x\sim y$) if their distribution functions $d_x$ and $d_y$ coincide. In particular, for every measurable function $x\in S    (I),$ the function $|x|$ is equimeasurable with its {\it decreasing rearrangement} $\mu(x)$ defined by the formula
$$\mu(t;x):=\inf \{\tau\geq0:\ d_{|x|}(\tau)<t \},\quad t>0.$$
If $ x,y\in S(I),$ then $\mu(x)=\mu(y)$ if and only if $|x|$ and $|y|$ are equimeasurable.% We recall that a function $x$ is said to be symmetrically distributed, if $x$ and $-x$ are equimeasurable.

\begin{defi} [see e.g. \cite{KPS}] Let $X\subset S(I)$ be a quasi-Banach space.
\begin{enumerate}%[{\rm (a)}]
\item $X$ is said to be a quasi-Banach function space if, from $x\in X,$ $y\in S(I)$ and $|y|\leq |x|,$ it follows that $y\in X$ and $\left\|y\right\|_X\leq \left\|x\right\|_X.$
\item a quasi-Banach function space $X$ is said to be a symmetric if, for every $x\in X$ and any measurable function $y,$ the assumption $\mu(y)=\mu(x)$ implies that $y\in X$ and $\left\|y\right\|_X=\left\|x\right\|_X.$
\end{enumerate}
\end{defi}
Without lost of generality, in what follows we always assume that $\left\|\chi_{(0,1)}\right\|_X=1$ for any symmetric function space  $X=X(0,1)$ on $(0,1)$.

\subsection{Noncommutative symmetric spaces}For detailed exposition of material in this subsection,  we refer to \cite{DPS}.
In what follows,  $\cH$ is a  Hilbert space and $B(\cH)$ is the
$*$-algebra of all bounded linear operators on $\cH$ equipped with the uniform norm $\left\|\cdot\right\|_\infty$, and
$\mathbf{1}$ is the identity operator on $\cH$.
Let $\mathcal{M}$ be
a von Neumann algebra on $\cH$.
We denote by $\cP(\cM)$ the collection  of all projections in $\cM$, by $\cM'$ the commutant of $\cM$ and by $\cZ(\cM)$ the center of $\cM$.
For more information about von Neumann algebras, see e.g. \cite{KR,Tak,Dixmier}.

A closed, densely defined operator $x:\mathfrak{D}\left( x\right) \rightarrow \cH $ with the domain $\mathfrak{D}\left( x\right) $ is said to be {\it affiliated} with $\mathcal{M}$
if $yx\subseteq xy$ for all $y\in \mathcal{M}^{\prime }$, where $\mathcal{M}^{\prime }$ is the commutant of $\mathcal{M}$.
A  closed,
densely defined
operator $x:\mathfrak{D}\left( x\right) \rightarrow \cH $ affiliated with $\cM $ is said to be
{\it measurable}  if  there exists a
sequence $\left\{ p_n\right\}_{n=1}^{\infty}\subset \cP\left(\mathcal{M}\right)$, such
that $p_n\uparrow \mathbf{1}$, $p_n(\cH)\subseteq\mathfrak{D}\left(x\right) $
and $\mathbf{1}-p_n$ is a finite projection (with respect to $\mathcal{M}$)
for all $n$.
 The collection of all measurable
operators with respect to $\mathcal{M}$ is denoted by $S\left(
\mathcal{M} \right) $, which is a unital $\ast $-algebra
with respect to strong sums and products (denoted simply by $x+y$ and $xy$ for all $x,y\in S\left( \mathcal{M%
}\right) $).

From now on, let $\mathcal{M}$ be a
semifinite von Neumann algebra equipped with a faithful normal
semifinite trace $\tau$.

An operator $x\in S\left( \mathcal{M}\right) $ is called $\tau$-measurable if
$\tau(e^{|x|}(s,\infty))<\infty$ for sufficiently large $s$, where
by $e^{|x|}$ is denoted the spectral measure of $|x|$.
The collection $S\left( \mathcal{M}, \tau\right)
$ of all $\tau $-measurable
operators is a unital $\ast $-subalgebra of $S\left(
\mathcal{M}\right) $.

Consider the algebra $\mathcal{M}=L^\infty(0,\infty)$ of all
Lebesgue measurable essentially bounded functions on $(0,\infty)$.
The algebra $\mathcal{M}$ can be seen as an abelian von Neumann
algebra acting via multiplication on the Hilbert space
$\mathcal{H}=L^2(0,\infty)$, with the trace given by integration
with respect to Lebesgue measure $m.$
It is easy to see that the
algebra of all $\tau$-measurable operators
affiliated with $\mathcal{M}$ can be identified with
the algebra $S(0,\infty)$.

\begin{definition}\label{mu}\cite{FK,Nelson,Se}
Let $x\in
S(\mathcal{M},\tau)$. The generalized singular value function $\mu(x):t\mapsto  \mu(t;x)$ of
the operator $x$ is defined by setting
$$
\mu(s;x)
=
\inf\{ s:d_{|x|}(s)\le t \}, ~t\ge 0,
$$
where $d_b(s) :=\tau(e^b(s,\infty))$, $s\in \mathbb{R}$, $b=b^*\in S(\cM,\tau)$. 
\end{definition}

%It is well-known \cite{LSZ,DP2} that if $a\in S(\cM,\tau)$ and $b,c\in \cM$, then
%\begin{align}\label{ineqfact}
%\mu(t;bac)\le \left\|b\right\|_\infty \left\|c\right\|_\infty \mu(t;a), ~\mu(t;a^*)=\mu(t;a).
%\end{align}
%{\color{red}In particular, $\mu(A) =\mu(|A|)$.}
%{\color{red}Suppose that $X\in S(\cM,\tau)$. If $0<\alpha \in \mathbb{R}$ and $E = E^{|X|}(\alpha,\infty)$, then
%\begin{align}\label{mu_shift}
%\mu(|X|E) = \mu(X)\chi_{[0,\tau(E))}
%\end{align}
%and
%\begin{align}\label{mu_shift2}
%\mu(t; |X|E^\perp) = \mu(t+\tau(E);X)
%\end{align}
%for all $t\ge 0$ whenever $\tau(E)<\infty$.}

\begin{definition}\label{def:symmetric}\cite{DPS}
 A linear subspace $E$ of $S(\cM,\tau)$ equipped with a complete quasi-norm $\norm{\cdot}_E$, is called a quasi-Banach  symmetric space (of $\tau$-measurable operators) if $x\in S(\cM,\tau)$, $y \in E$ and $\mu(x)\le \mu(y)$ imply that $x\in E$ and $\norm{x}_E \le \norm{y}_E$.
\end{definition}

It is well-known that any quasi-Banach  symmetric space $E$ is a quasi-normed $\cM$-bimodule, that is, $axb\in E$ for any $x\in E$, $a,b\in \cM$ and $\left\|axb\right\|_E\leq \left\|a\right\|_\infty\left\|b\right\|_\infty \left\|x\right\|_E$ \cite{DP2,DPS}.
%A linear subspace $E$ of $S(\cM,\tau)$ equipped with a complete norm $\|\cdot\|_E$, is called \emph{fully symmetric space} (of $\tau$-measurable operators) if $X\in S(\cM,\tau)$, $Y \in E$ and $X\prec\prec Y$ imply that $X\in E$ and $\|X\|_E \le \|Y\|_E$.
%

%
%
%
%
%The so-called K\"{o}the dual is identified with an important part of the dual space. If $E  \subset S(\cM,\tau)$ is a symmetric space, then the K\"{o}the dual $E^\times $ of $E$ is defined by setting $$ E^\times =\{   X\in S(\cM,\tau) : \sup_{\|Y\|_E\le 1, Y\in E}\tau (|XY|)   <\infty    \}.$$
%

 A wide class of quasi-Banach  symmetric operator spaces associated with the von Neumman algebra $\cM$ can be constructed from concrete symmetric function spaces studied extensively in e.g. \cite{KPS}. Let 
 $\cE :=  E(0,\infty) $ be a  quasi-Banach  symmetric function space on the semi-axis $(0,\infty)$ (or $\cE : =E(0,1) $ for the case when  $(\cM,\tau)$ is \emph{a noncommutative probability space}, i.e., $\tau$ is a faithful normal tracial state). Then the pair 
 $$E(\cM,\tau)=\{x\in S(\cM,\tau):\mu(x)\in \cE \},\quad \left\|x\right\|_{E(\cM,\tau)}:=\left\|\mu(x)\right\|_{\cE }$$ is a  quasi-Banach  symmetric space on $\cM$ \cite{Kalton_S} (see also \cite{LSZ,S14}). 
% For convenience, we denote $\left\|\cdot\right\|_{ E(\cM,\tau)}$ by $\left\|\cdot\right\|_E $. 
%
% 

% If $\tau>0,$ the dilation operator $\sigma_{\tau}$ is defined by setting 
% $\sigma_{\tau}x(s)=x(s/{\tau}),$ $s>0,$ in the case of the semi-axis. In the case of the interval $(0,1),$ the operator $\sigma_{\tau}$ is defined by
% $$
% \sigma_{\tau}x(s)=
% \begin{cases}
% x(s/\tau),& s\leq\min\{1,\tau\}\\
% 0,& \tau<s\leq1.
% \end{cases}
% $$

%We write $B\prec\prec A$ (and say that $B$ is submajorized by $A$ in the sense of Hardy--Littlewood--P\'{o}lya) if
%$$\int_0^t\mu(s,B)ds\leq\int_0^t\mu(s,A)ds,\quad t>0.$$
%If $  A,B\in L_1(\mathcal{M},\tau)$ are positive  operators  such that $B\prec\prec A$ and $\tau(B)=\tau(A),$ then we write $B\prec A$ (and say that $B$ is majorized by $A$ in the sense of Hardy--Littlewood--P\'{o}lya).

%Recall  the following properties of submajorization (see e.g. \cite[Theorem  3.3.3 and Lemma 3.3.7]{LSZ})
%\begin{equation}\label{maj property1}
%A+B\prec\prec\mu(A)+\mu(B),\quad A,B\in (L_1+L_{\infty})(\mathcal{M},\tau)
%\end{equation}
%and 
%\begin{equation}\label{maj property2}
%A\oplus B\prec\prec A+B,\quad 0\leq A,B\in (L_1+L_{\infty})(\mathcal{M},\tau).
%\end{equation}
%If also $0\leq A,B\in L_1(\mathcal{M},\tau),$ then we can replace $\prec\prec$ with $\prec.$

 \subsection{The Kruglov operator}\label{S:k}
 Let $\Omega =\Pi_{k=0}^\infty (0,1)$ be the (infinite dimensional) hypercube equipped with the product Lebesgue measure 
$dm^\infty $. 
The Kruglov operator $K$ acts from $L_1(0,1)$ to $L_1(\Omega)$ by the following formula
$$Kx=\sum_{k=1}^\infty \sum_{m=1}^k \chi_{_{A_k}} \otimes \chi_{(0,1)}^{\otimes (m-1)} \otimes x \otimes \chi_{(0,1)}^{\otimes \infty 
}, ~x\in L_1(0,1), $$
where $A_k$, $k\ge 0$, are pairwise disjoint sets with $m(A_k) = \frac{1}{e\cdot k!} $ for all $k\ge 0$ (so that
 $\cup_{k\ge 0}
A_k =(0,1)$) and where $\chi_B$ is the indicator function of the measurable set $B\subset (0,1)$. 

Let $\cM$ be a semifinite von Neumann algebra (on a separable Hilbert space $\cH$)  equipped with a faithful normal trace $\tau$. 
The Kruglov operator $K_\cM$ is a \emph{positive} bounded  operator from  $L_1(\cM,\tau)$ into $L_1(\cN,\sigma)$ for some finite von Neumann algebra $\cN$ equipped with a faithful normal tracial state $\sigma_\cM$ such that \cite[Theorem 32]{JSZ}
$$\sigma_\cM ({\rm exp}(i K_\cM x)) ={\rm exp}(\tau({\rm exp}(ix)-1)), ~x=x^* \in L_1(\cM,\tau). $$ 
Moreover, if $\cM$ is hyperfinite, then $\cN$ can be chosen to be hyperfinite\cite[Theorem 32]{JSZ}. The operator $K_\cM$ has the following properties:
\begin{enumerate}
  \item If self-adjoint operators $x$ and $ y\in L_1(\cM,\tau)$ are equimeasurable (i.e., $d_{x_+}=d_{y_+} $ and 
  $d_{x_-}=d_{y_-}$), then $K_\cM x, K_\cM y \in L_1(\cN,\sigma_\cM)$ are equimeasurable too.
  In addition, if $x=x^*\in L_1(\cM,\tau)$ is equimeasurable with $y\in L_1 (0,1)$, then the elements $K_\cM x\in L_1(\cN,\sigma_\cM)$
  and $Ky\in L_1(0,1)$ are equimeasurable~\cite[Corollary 33(i)]{JSZ}. 
  \item The operator $K_\cM$ acts boundedly from $(L_1\cap L_2)(\cM,\tau)$ %(equipped with the norm $\norm{\cdot}_{L_1\cap L_2} =\norm{\cdot}_{L_1}+\norm{\cdot}_{L_2}$) 
  into $L_2(\cN,\sigma_\cM)$ \cite[Corollary 33(ii)]{JSZ}.
 %  \item For all $x,y \in (L_1\cap L_2)(\cN)$, the equality $[K_\cM x, K_\cM y]=K_\cM [x,y]$ holds. 
%  \item If $x_k =x_k^*\in L_1(\cM,\tau)$, $1\le k\le n$, are such that $x_j x_k=0$ for $j\ne k $, then the random variable $K_\cM x_k\in L_1(\cN,\sigma_\cM)$, $1\le k \le n$, are independent in the sense of \cite[Section 2.8]{JSZ}. 
%  \item The operator $K_\cM$ maps symmetrically distributed operators from $L_1(\cM,\tau)$ into symmetrically distributed operators from $L_1(\cN,\sigma_\cM)$. 
  \item If $\cM=\cR\bar{\otimes} B(\cH)$, then $K_\cM$ acts from $L_1(\cR\bar{\otimes} B(\cH), \tau\otimes {\rm Tr} )$ into $L_1(\cR,\tau)$~\cite[Corollary 34]{JSZ}. 
  \item For any $\tau$-finite projection $e\in \cM$, by the construction of Kruglov operators, 
  the Kruglov operator $K_{e\cM e}$ coincides with the restriction of $K_\cM$ on $e\cM e$ (up to $*$-isomorphisms), see  the proof of \cite[Theorem 32]{JSZ}. 
\end{enumerate}
%We denote by   $K_{\mathcal{M}}$    the Kruglov operator corresponding to the semifinite von Neumann algebra $(\mathcal{M},\tau).$
\subsection{The Haagerup $L_p$-spaces}\label{Haagerup}
We recall the construction of noncommutative $L_p$-spaces associated with an arbitrary 
von Neumann algebra. 
We use Haagerup's definition \cite{Ha} (see also \cite[Chapter 9]{Hiai}). 

Let $\cM$ be an arbitrary von Neumann algebra with a faithful normal semifinite weight $\phi_0$.
We consider the one-parameter modular automorphism group $\sigma^{\phi_0} = \left\{\sigma_t^{\phi_0}\right\}_{t\in \mathbb{R}}$
(associated with $\phi_0$) on $\cM$ and obtain a semifinite crossed product von Neumann algebra
$$\mathfrak{M} :=\cM \rtimes_{\sigma^{\phi_0}}\mathbb{R},$$
which admits the canonical semifinite trace $\tau$ and a trace-scaling dual action $\{\theta_s\}_{s\in \mathbb{R}}$ such that 
$$\tau\circ \theta_s =e^{-s }\tau \mbox{  for all }s\in \mathbb{R}.$$
Note that $\mathfrak{M}$ is of type $II_\infty$ when $\cM$ is non-trivial\cite[Theorem 4.7]{Daele}.

The original von Neumann algebra $\cM$ can be identified with a $\theta$-invariant von Neumann subalgebra  of $\mathfrak{M}  $. 
For $0< p<\infty $, the noncommutative Haagerup $L_p$-space $L_p(\cM)$ is defined by 
$$L_p (\cM) :=   \left\{x  \in S(\mathfrak{M}  ,\tau):\theta_s(x)=e^{-\frac{s}{p}  }x \mbox{ for all } s\in \mathbb{R}\right\}.$$
It is known that there is a linear bijection $\psi\mapsto x_\psi$ between the predual $\cM_*$ and $L_1(\cM)$. 
Due to this correspondence, we define the trace ${\rm tr}: L_1(\cM)\to \mathbb{C}$ by setting 
$${\rm tr}(x_\psi):=\psi(1),~x_\psi \in L_1(\cM). $$
For a given $x\in L_p(\cM)$, $0<p<\infty$, we have the polar decomposition $x=u|x|,$ where $|x|$ is a positive operator in $L_p(\cM) $ and $u$ is a partial isometry contained in $\cM$. The quasi-Banach norm on $L_p(\cM)$ is given by 
$$\norm{x}_{L_p(\cM)}:= {\rm  tr}(|x|^p), ~x\in L_p(\cM).$$
 Recall that  the weak $L_p$-space $L_{p,\infty}(0,\infty )$ is defined by \cite{KPS,DPS,Bennett_S}
$$L_{p,\infty }(0,\infty ) := \left\{f\in S(0,\infty): \norm{f}_{L_{p,\infty}(0,\infty)} := \sup_{0<t<\infty }\left\{t^{\frac1p }\mu(t;f)\right\} \right\}.$$
The space $L_p(\cM)$ is a closed linear subspace of $L_{p,\infty }(\mathfrak{M}  ,\tau)$
and (see e.g.  \cite[Lemma 4.8]{FK}, see also \cite[Lemma 2.4]{ST} or \cite[Lemma 9.14]{Hiai})
\begin{align}\label{haLp}\mu(t;x) =\norm{x}_{L_p(\cM)}\cdot t^{-\frac1p}, ~\forall t>0. 
\end{align}
Note that if the von Neumann algebra $\cM$ has separable predual, then $\mathfrak{M}$ also has separable predual (see e.g. \cite[Section 8.2 or p.204]{Hiai}).

\section{Proof of Theorem \ref{main}}
In the section, we prove the main theorem of the present paper,  Theorem \ref{main} above.
Before proceeding to the proof of Theorem~\ref{main}, we need several properties of the Kruglov operator. 

We denote  $(L_1\cap L_\infty) (\cM,\tau) :=  L_1(\cM,\tau)\cap\cM$, the space consists of $x\in S(\cM,\tau))$ such that 
\begin{align}\label{def:L1}
\norm{x}_{(L_1\cap L_\infty
) (\cM,\tau)}:=\max\left\{\norm{x}_{L_1(\cM,\tau)},\norm{x}_\cM \right\}<\infty.
\end{align}
We denote the set of all partitions\footnote{A partition $\pi=(V_1,V_2,\cdots)$ of a set $X$ is a collection of non-empty subsets $V_1,V_2,\cdots$ of $X$ such that every element $x$ in $X$ is in exactly one of these subsets $V_i$'s (i.e., the subsets are nonempty mutually disjoint sets).  In the present paper, we always assume that elements in $V_i$ are increasing. } of the set $\{1,\cdots,n\}$ by $S_n.$  The following lemma is the key ingredient of the proof of Theorem \ref{main} below.

\begin{lem}\label{mixed momenta formula} Let $\cM$ be a von Neumann algebra on a separable Hilbert space,  equipped with a semifinite faithful normal trace $\tau.$  We have
\begin{align}\label{31}
\sigma_{\mathcal{M}}\left(\prod_{k=1}^nK_{\mathcal{M}}(x_k)\right)=\sum_{\pi\in S_n}\prod_{V\in\pi}\tau\left(\prod_{k\in V}x_k\right),~ x_1,\cdots,x_n\in (L_1\cap L_{\infty})(\mathcal{M},\tau).
\end{align}
\end{lem}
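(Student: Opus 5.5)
The plan is to prove \eqref{31} first when all $x_k$ sit in a corner $e\mathcal{M}e$ with $\tau(e)<\infty$, by a direct combinatorial computation from the explicit construction of $K_{\mathcal{M}}$. The general case then follows by approximation.

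\emph{Step 1 (finite corner).} Let $e\in\mathcal{P}(\mathcal{M})$ with $\lambda:=\tau(e)<\infty$, and suppose $x_1,\dots,x_n\in e\mathcal{M}e$. By property (4) of the Kruglov operator, we may compute with $K_{e\mathcal{M}e}$. By the construction in \cite[Theorem 32]{JSZ}, this is the noncommutative analogue of $K$: up to $*$-isomorphism,
$$K_{e\mathcal{M}e}x=\sum_{k\ge 1}\sum_{m=1}^k\chi_{A_k}\otimes \mathbf{1}^{\otimes(m-1)}\otimes x\otimes\mathbf{1}^{\otimes\infty},$$
acting in $L_\infty(0,1)\bar\otimes(e\mathcal{M}e)^{\bar\otimes\infty}$ with the state $\int\otimes(\lambda^{-1}\tau)^{\otimes\infty}$. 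Here the $A_k$ are now chosen with $m(A_k)=e^{-\lambda}\lambda^k/k!$, i.e.\ $N=\sum_k k\chi_{A_k}$ is Poisson with mean $\lambda$. (If the construction is normalized differently, I would first check that it reduces to this form by inspecting the proof of \cite[Theorem 32]{JSZ}.)

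On $A_k$, the product $\prod_{j=1}^nK(x_j)$ expands as a sum over maps $f:\{1,\dots,n\}\to\{1,\dots,k\}$. Distinct tensor legs commute, so the term for $f$ equals a tensor product over $m\in f(\{1,\dots,n\})$ of $\prod_{j\in f^{-1}(m)}x_j$, each taken in increasing order of $j$. Its state is therefore $\prod_{m}\lambda^{-1}\tau\bigl(\prod_{j\in f^{-1}(m)}x_j\bigr)$.

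Now group the maps $f$ by their kernel partition $\pi\in S_n$. For a partition $\pi$ with $r$ blocks there are $k(k-1)\cdots(k-r+1)$ such maps. Averaging over $k$ uses the Poisson factorial moment $\mathbb{E}[N(N-1)\cdots(N-r+1)]=\lambda^r$. This factor $\lambda^r$ cancels the $\lambda^{-r}$ from the normalized traces, which yields exactly $\sum_{\pi}\prod_{V\in\pi}\tau\bigl(\prod_{k\in V}x_k\bigr)$. The interchange of the sum over $k$ with the state is justified because everything is bounded by $\sum_k k^n m(A_k)\max_j\|x_j\|_\infty^n<\infty$.

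\emph{Step 2 (approximation).} Since $\mathcal{H}$ is separable, $\tau$ is $\sigma$-finite, so there are $\tau$-finite projections $e_l\uparrow\mathbf{1}$. For $x\in(L_1\cap L_\infty)(\mathcal{M},\tau)$ put $x^{(l)}=e_lxe_l$. Then $\|x-x^{(l)}\|_\infty\le2\|x\|_\infty$ and $\|x-x^{(l)}\|_1\to0$, hence $\tau(|x-x^{(l)}|^s)\le(2\|x\|_\infty)^{s-1}\|x-x^{(l)}\|_1\to0$ for every $s\ge1$.

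Apply Step 1 to $y=x^{(l')}-x^{(l)}$ with $l'>l$, viewed in the corner $e_{l'}\mathcal{M}e_{l'}$, and take $y_1=y,\ y_2=y^*,\dots$, using $K(y)^*=K(y^*)$. This bounds $\sigma\bigl(|K(y)|^{2m}\bigr)$ by a finite sum of products of quantities $|\tau(\cdots)|\le\tau(|y|^{|V|})$ (by H\"older). Hence $K_{\mathcal{M}}(x^{(l)})$ is Cauchy in every $L_{2m}(\mathcal{N},\sigma_{\mathcal{M}})$. Its limit must be $K_{\mathcal{M}}(x)$ by $L_1$-boundedness of $K_{\mathcal{M}}$. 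Then H\"older's inequality in $L_n$ lets us pass to the limit on the left side of \eqref{31}. On the right side, each $\tau\bigl(\prod_{k\in V}x_k^{(l)}\bigr)\to\tau\bigl(\prod_{k\in V}x_k\bigr)$, since the factors are uniformly bounded and converge in $L_1$.

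\emph{Main obstacle.} I expect the main obstacle to be Step 2. Specifically, one must get uniform $L_{2m}$ control of $K_{\mathcal{M}}$ on differences $x^{(l')}-x^{(l)}$, so that the moment formula itself controls the approximation error; this is what lets the finite-corner computation extend to all of $(L_1\cap L_\infty)(\mathcal{M},\tau)$. A secondary point needing care is identifying the concrete tensor-product/Poisson model of $K_{e\mathcal{M}e}$ from \cite{JSZ}, including the correct normalization $\lambda=\tau(e)$.
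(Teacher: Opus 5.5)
Your proposal is correct and follows the paper's proof. Step 1 is the same finite-trace computation: your Poisson factorial-moment identity $\mathbb{E}[N(N-1)\cdots(N-r+1)]=\lambda^r$ is exactly the paper's summation $e^{-\tau(\mathbf{1})}\sum_{l\ge|\pi|}\frac{\tau(\mathbf{1})^{l-|\pi|}}{(l-|\pi|)!}=1$ in the model $\sigma_{\mathcal{M}}=e^{-\tau(\mathbf{1})}\bigoplus_{l}\frac{1}{l!}\tau^{\otimes l}$. Step 2 is the same reduction to a $\tau$-finite corner via property (4), and it is in fact more careful than the paper's version: the paper only cites $L_1$-continuity of $K_{\mathcal{M}}$, which by itself does not justify passing to the limit in an $n$-fold product of unbounded operators, whereas your $L_{2m}$-Cauchy estimate, derived from the corner moment formula itself, closes that gap.
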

\begin{proof} {\bf Step 1:} Assume that  $\tau({\bf 1})<\infty$, where ${\bf 1}$ stands for the identity of $\cM$. 
Recall that \cite[(28)]{JSZ}
$$K_{\mathcal{M}}x= 0 \oplus \bigoplus_{l=1}^\infty \left(\sum_{m=1}^l {\bf 1}^{\otimes (m-1)}  \otimes x \otimes {\bf 1}^{\otimes (l-m)}\right),$$
and \cite[(18)]{JSZ}
$$\sigma_\cM=e^{-\tau({\bf 1})} \bigoplus_{l=0}^\infty \frac{1}{l!}\tau^{\otimes l}.$$
We have 
\begin{align*}
\prod_{k=1}^nK_{\mathcal{M}}(x_k)&=\prod_{k=1}^n\left(0 \oplus \bigoplus_{l=1}^\infty \left(\sum_{m=1}^l {\bf 1}^{\otimes (m-1)}  \otimes x_k \otimes {\bf 1}^{\otimes (l-m)}\right)\right)\\
&=0\oplus\bigoplus_{l=1}^{\infty}\prod_{k=1}^n\left(\sum_{m=1}^l {\bf 1}^{\otimes (m-1)}  \otimes x_k \otimes {\bf 1}^{\otimes (l-m)}\right),
\end{align*}
and therefore, 
\begin{align*}
&~\quad \sigma_{\mathcal{M}}\left(\prod_{k=1}^nK_{\mathcal{M}}(x_k)\right)\\
&=\sum_{l=1}^{\infty}\frac1{e^{\tau({\bf 1})}\cdot l!} \cdot \tau^{\otimes l}\left(\prod_{k=1}^n\left(\sum_{m=1}^l {\bf 1}^{\otimes (m-1)}  \otimes x_k \otimes {\bf 1}^{\otimes (l-m)}\right)\right)\\
&=\sum_{l=1}^{\infty}\frac1{e^{\tau({\bf 1})}\cdot l!}\sum_{m_1,\cdots,m_n=1}^l\tau^{\otimes l}\left(\prod_{k=1}^n\left({\bf 1}^{\otimes (m_k-1)}  \otimes x_k \otimes {\bf 1}^{\otimes (l-m_k)}\right)\right).
\end{align*}
 For a fixed number $l\ge 1$ and a given partition  $\pi=\{V_1,V_2,\cdots, V_{|\pi|}\}$ (where $|\pi|$ stands for the cardinality of $\pi$) of the set $\{1,\cdots,n\},$ consider the following set of   $n$-tuples
$$C:= \left\{(m_1,\cdots m_n ) \in \{1,\cdots ,l\}^n : \mbox{$m_i=m_j$ iff $i,j\in V$ for some $V\in\pi$ }   \right\}.$$
In other words,
 each $n$-tuple corresponds to an injective mapping from $\{V_1,V_2,\cdots,V_{\pi }\}$ into $\{1,\cdots, l\}$, see below.
\begin{center}
\begin{tikzpicture}
  \node (a) at (0,6.2)  [rectangle,thick,draw,minimum height=1.2cm,minimum width=1.2cm]  {$V_{|\pi|}$};
  \node (b)at (0,5) [rectangle,thick,draw,minimum height=1.2cm,minimum width=1.2cm]   {$V_{|\pi|-1}$};
  \node (c) at (0,4) [rectangle]  {$\vdots$};
  \node (d)  at (0,3) [rectangle,thick,draw,minimum height=1.2cm,minimum width=1.2cm]  {$V_3$};
  \node (e)  at (0,1.8) [rectangle,thick,draw,minimum height=1.2cm,minimum width=1.2cm] {$V_2$};
  \node (f) at (0,0.6) [rectangle,thick,draw,minimum height=1.2cm,minimum width=1.2cm]  {$V_1$};
\node (g)  at (6,6) [rectangle,thick,draw,minimum height=1cm,minimum width=1cm]   {$l$};
\node  at (9,6)    {$m_i=l$ for all $i\in V_3$};
\node (h)  at (6,5)[rectangle,thick,draw,minimum height=1cm,minimum width=1cm]  {$l-1$};
\node  at (9,5)    {$m_i=l-1$ for all $i\in V_{|\pi|}$};
\node  (i) at (6,3) [rectangle,thick,draw,minimum height=1cm,minimum width=1cm]    {$3$};
\node  at (9,3)    {$m_i=3$ for all $i\in V_{1}$};
\node   (j) at (6,4 ) [rectangle]  {$\vdots$};
\node (k)  at (6,2)  [rectangle,thick,draw,minimum height=1cm,minimum width=1cm]  {$2$};
\node  at (9,2)    {$m_i=2$ for all $i\in V_{|\pi|-1}$};
\node  (l) at (6,1) [rectangle,thick,draw,minimum height=1cm,minimum width=1cm]    {$1$};
\node  at (9,1)    {$m_i=1$ for all $i\in V_{2}$};
  \draw  (a) -- (h)
    (e) -- (l)
(f) -- (i)
(d) --(g)
     (b) -- (k);
\end{tikzpicture}
\end{center}
The cardinality of $C$ is ($m_i$'s go through all possible values, see the following picture) 
 $$l\cdot (l-1) \cdots (l-|\pi |+1) = \frac{l!}{(l-|\pi|)!}, \mbox{ if } \mbox{ $l$ and $\pi $ such that  $l\ge |\pi|$;} $$
\begin{center}
\begin{tikzpicture}
%Nodes
%\node[squarednode]      (maintopic)                              {2};
\node at (0,4) [rectangle,thick,draw,minimum height=0.5cm,minimum width=0.5cm] (l) {$l$};
\node at (0,3.3) [rectangle] () {$\vdots$};
\node at (0,2.5) [rectangle,thick,draw,minimum height=0.5cm,minimum width=0.5cm] () {$2$};
\node at (0,2) [rectangle,thick,draw,minimum height=0.5cm,minimum width=0.5cm] () {$1$};
\node at (0,1)  () {$V_1$};
\node at (2,4) [rectangle,thick,draw,minimum height=0.5cm,minimum width=0.5cm] (l) {$l$};
\node at (2,3.3) [rectangle] () {$\vdots$};
\node at (2,2.5) [rectangle,thick,draw,minimum height=0.5cm,minimum width=0.5cm] () {$2$};
\node at (2,2) [rectangle,thick,draw,minimum height=0.5cm,minimum width=0.5cm] () {$1$};
\node at (2,1)  () {$V_2$};
\node at (3.5,3) [rectangle] () {$\cdots$};
\node at (3.5,1) [rectangle] () {$\cdots$};
\node at (6,4) [rectangle,thick,draw,minimum height=0.5cm,minimum width=0.5cm] (l) {$l$};
\node at (6,3.3) [rectangle] () {$\vdots$};
\node at (6,2.5) [rectangle,thick,draw,minimum height=0.5cm,minimum width=0.5cm] () {$2$};
\node at (6,2) [rectangle,thick,draw,minimum height=0.5cm,minimum width=0.5cm] () {$1$};
\node at (6,1)  () {$V_{|\pi|}$};
\node at (-2,1)  () {$\pi:$};
\node at (-3.2,3.5)  () {for each $V_i$, the};
\node at (-3.2,3)  () {number of values of};
\node at (-3.2,2.5)  () {$m_j$, $j\in V_i $:};
\draw [decorate,decoration={brace,amplitude=11pt,raise=1ex}]
  (-.3,2) -- (-.3,4) node[midway,xshift=-2em]{$l$};
  \draw [decorate,decoration={brace,amplitude=11pt,raise=1ex}]
  (1.7,2) -- (1.7,4) node[midway,xshift=-2.5em]{$l-1$};
    \draw [decorate,decoration={brace,amplitude=11pt,raise=1ex}]
  (5.7,2) -- (5.7,4) node[midway,xshift=-2.9em]{$l-|\pi|$};
%\node[squarednode]      (rightsquare)       [right=of maintopic] {3};
%\node[roundnode]        (lowercircle)       [below=of maintopic] {4};

%Lines
%\draw[->] (uppercircle.south) -- (maintopic.north);
%\draw[->] (maintopic.east) -- (rightsquare.west);
%\draw[->] (rightsquare.south) .. controls +(down:7mm) and +(right:7mm) .. (lowercircle.east);
\end{tikzpicture}
\end{center}

For any  tuple $(m_1,\cdots,m_n)\in C,$ we have
\begin{align*}
&~\quad \tau^{\otimes l}\left(\prod_{k=1}^n\left({\bf 1}^{\otimes (m_k-1)}  \otimes x_k \otimes {\bf 1}^{\otimes (l-m_k)}\right)\right)\\
&=
\tau^{\otimes l}\left( \cdots   \otimes \left( \prod_{k\in V_1 }  x_k\right) \otimes   \cdots   \otimes \left( \prod_{k\in V_2 }  x_k\right) \otimes   \cdots  \otimes \left( \prod_{k\in V_3 }  x_k\right)\otimes \cdots  \right)\\
&
=\prod_{V\in\pi}\tau\left(\prod_{k\in V}x_k\right)\cdot \tau({\bf 1})^{l-|\pi|}.
\end{align*}
Observe that,
if  $l<  |\pi|$, then 
$C$ is empty, 
   i.e., 
 the cardinality $|C|$  is $0$. Thus,
\begin{align*}
&~\quad \sigma_{\mathcal{M}}\left(\prod_{k=1}^nK_{\mathcal{M}}(x_k)\right)\\
&=\sum_{l=1}^{\infty}\frac1{e^{\tau({\bf 1})}\cdot l!}\sum_{\pi\in S_n}\prod_{V\in\pi}\tau\left(\prod_{k\in V}x_k\right)\cdot \tau({\bf 1})^{l-|\pi|}\cdot \frac{l!}{(l-|\pi|)!}\chi_{_{\{|\pi|\leq l\}} }\\
&=\sum_{\pi\in S_n}\prod_{V\in\pi}\tau\left(\prod_{k\in V}x_k\right)\cdot\left(\frac1{e^{\tau({\bf 1})} }  \sum_{l= |\pi|}^\infty \frac1{ l!}\cdot\tau({\bf 1})^{l-|\pi|}\cdot\frac{l!}{(l-|\pi|)!}\right)\\
&=\sum_{\pi\in S_n}\prod_{V\in\pi}\tau\left(\prod_{k\in V}x_k\right)\cdot\left(\frac1{e^{\tau({\bf 1})} } \sum_{l= |\pi|}^\infty  \frac{\tau({\bf 1})^{l-|\pi|}}{  (l-|\pi|)!}\right)\\
&=\sum_{\pi\in S_n}\prod_{V\in\pi}\tau\left(\prod_{k\in V}x_k\right)\cdot\left(\frac1{e^{\tau({\bf 1})} }   \sum_{m=0}^\infty \frac{\tau({\bf 1})^m }{ m !}\right)\\
&=\sum_{\pi\in S_n}\prod_{V\in\pi}\tau\left(\prod_{k\in V}x_k\right).
\end{align*}

{\bf Step 2:} Now, we prove the  general case when $\tau({\bf 1})$ is not necessarily finite.
Since $K_\cM$ is continuous from $L_1(\cM,\tau)$ into $L_1(\cN,\sigma)$ and $x_k$'s are elements in $(L_1\cap L_\infty ) (\cM,\tau)$, it suffices to 
prove the case when  each $x_k$ has  a 
$\tau$-finite left (and right) support projection. Let $$p=\left(
\bigvee_{k=1}^n\mathfrak{l}(x_k)\right)\bigvee\left(\bigvee_{k=1}^n\mathfrak{r}(x_k)\right),$$
where $\mathfrak{l}(x)$ and $\mathfrak{r}(x)$ stand for the left and the right supports respectively. 
In particular, $\tau(p)<\infty$ (see e.g. \cite[Proposition 1.15.9]{DPS}). It follows from Step 1 that
$$\sigma_{\mathcal{M}}\left(\prod_{k=1}^nK_{\mathcal{M}}(x_k)\right)\stackrel{\tiny \mbox{Section } \ref{S:k}} {=} \sigma_{p\mathcal{M}p}\left(\prod_{k=1}^nK_{p\mathcal{M}p}(x_k)\right)
=\sum_{\pi\in S_n}\prod_{V\in\pi}\tau\left(\prod_{k\in V}x_k\right).$$
This completes the proof.
\end{proof}

For any $x$ in a semifinite von Neumann algebra $\cM$, we define 
$$v(x)=K_{\mathcal{M}\otimes \mathbb{M}_2(\mathbb{C})}(x\otimes E_{01}),$$
where $E_{01}=\left(
                \begin{array}{cc}
                  0 & 1 \\
                  0 & 0 \\
                \end{array}
              \right)
$ and $
E_{10}=\left(
                \begin{array}{cc}
                  0 & 0 \\
                  1 & 0 \\
                \end{array}
              \right). $

Let $n\ge 1$. 
We say that partition $\pi\in S_{2n}$ is {\it good} (denoted by $\pi\in S_{2n}^{{\rm good}}$) if, for every $V=\{k_1,\cdots,k_m\} \in\pi,$ we have $|V|=m $ is even and $k_{l+1}$ has the opposite parity to $k_l$ for every $1\leq l<m.$ Clearly, such a partition exists, e.g. $\left\{ \{1,2\},\{3,4\},\cdots,\{2n-1, 2n\}  \right\}$. 

\begin{lem}\label{lemmagood}Let $\cM$ be a von Neumann algebra on a separable Hilbert space, equipped with a semifinite faithful normal trace $\tau$.   For any $n\ge 1$, 
we have
\begin{align}\label{eqlemmagood}
\sigma_{\mathcal{M}\otimes \mathbb{M}_2(\mathbb{C})}\left (|v(x)|^{2n}\right)=\sum_{\pi\in S_{2n}^{{\rm good}}}\prod_{V\in\pi}\tau\left(|x|^{|V|}\right),~\forall x\in( L_1\cap L_\infty )(\cM,\tau).
\end{align}
\end{lem}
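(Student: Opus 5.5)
The plan is to reduce \eqref{eqlemmagood} to Lemma~\ref{mixed momenta formula}, applied to the semifinite algebra $\mathcal{M}\otimes\mathbb{M}_2(\mathbb{C})$ with trace $\tau\otimes{\rm Tr}$.

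\textbf{Step 1: write $|v(x)|^{2n}$ as a product of Kruglov images.} Since $K$ is positive, it preserves adjoints, so $v(x)^*=K_{\mathcal{M}\otimes\mathbb{M}_2}(x^*\otimes E_{10})$. Hence
$$|v(x)|^{2n}=(v(x)^*v(x))^n=\prod_{k=1}^{2n}K_{\mathcal{M}\otimes\mathbb{M}_2}(y_k),\qquad y_k=\begin{cases}x^*\otimes E_{10},& k \text{ odd},\\ x\otimes E_{01},& k\text{ even}.\end{cases}$$
Each $y_k$ lies in $(L_1\cap L_\infty)(\mathcal{M}\otimes\mathbb{M}_2,\tau\otimes{\rm Tr})$. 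Lemma~\ref{mixed momenta formula} then gives
$$\sigma\left(|v(x)|^{2n}\right)=\sum_{\pi\in S_{2n}}\prod_{V\in\pi}(\tau\otimes{\rm Tr})\Big(\prod_{k\in V}y_k\Big).$$

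\textbf{Step 2: the matrix-unit computation.} Take a block $V=\{k_1<\dots<k_m\}$. The product $\prod_{k\in V}y_k$ equals (a product of $x$'s and $x^*$'s) $\otimes E_{i_1j_1}E_{i_2j_2}\cdots$. We use two facts about matrix units:
\begin{itemize}
\item $E_{10}E_{10}=E_{01}E_{01}=0$, so the product vanishes unless consecutive indices $k_l,k_{l+1}$ have opposite parity.
\item ${\rm Tr}(E_{10})={\rm Tr}(E_{01})=0$, and an alternating word has trace zero if its length $m$ is odd. So the term vanishes unless $m$ is even.
\end{itemize}
Thus only good partitions contribute. For a good block, the word alternates starting with either $x^*$ (if $k_1$ is odd) or $x$ (if $k_1$ is even). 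In the first case the factor is $(x^*x)^{m/2}\otimes E_{11}$; in the second it is $(xx^*)^{m/2}\otimes E_{00}$. Either way, the trace equals $\tau(|x|^m)$, since $\tau((xx^*)^{j})=\tau((x^*x)^{j})=\tau(|x|^{2j})$ by traciality. (If indexing conventions flip which diagonal unit appears, this does not matter.) Summing over good partitions yields \eqref{eqlemmagood}.

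\textbf{Main obstacle.} Step 2 is mechanical. The real difficulty is justifying Step 1: that $K$ commutes with adjoints, and that Lemma~\ref{mixed momenta formula} applies to $\mathcal{M}\otimes\mathbb{M}_2$, which is still semifinite on a separable Hilbert space, so this is fine. Adjoint-compatibility follows from positivity of $K$, or directly from the explicit formula \cite[(28)]{JSZ} on finite-trace corners, as in Step 2 of the proof of Lemma~\ref{mixed momenta formula}.
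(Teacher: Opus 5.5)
Your proposal is correct and follows essentially the same route as the paper. Both arguments run as follows: obtain $v(x)^*=K_{\mathcal{M}\otimes\mathbb{M}_2(\mathbb{C})}(x^*\otimes E_{10})$ from positivity of the Kruglov operator, write $(v(x)^*v(x))^n$ as the alternating product $\prod_{k=1}^{2n}K_{\mathcal{M}\otimes\mathbb{M}_2(\mathbb{C})}(y_k)$, apply Lemma~\ref{mixed momenta formula} to $(\mathcal{M}\otimes\mathbb{M}_2(\mathbb{C}),\tau\otimes{\rm Tr})$, discard non-good partitions via $E_{10}E_{10}=E_{01}E_{01}=0$ and ${\rm Tr}(E_{10})={\rm Tr}(E_{01})=0$, and evaluate each good block as $\tau(|x|^{|V|})$ using $\tau((xx^*)^{j})=\tau((x^*x)^{j})$. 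Incidentally, your second factor $x\otimes E_{01}$ is the right one; the paper's display \eqref{sigmaM2} writes $x^{\ast}\otimes E_{01}$ there, which is a typo.
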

\begin{proof}
Since $K_{\mathcal{M}\otimes \mathbb{M}_2}$ preserves positivity, it follows that $K_{\mathcal{M}\otimes \mathbb{M}_2}$ maps self-adjoint operators to self-adjoint operators.
Hence, 
\begin{align}\label{adjointv}
v(x)^{\ast}& = \left( K_{\mathcal{M}\otimes \mathbb{M}_2(\mathbb{C})}(x\otimes E_{01})  \right)^* \nonumber \\
& = \left( K_{\mathcal{M}\otimes \mathbb{M}_2(\mathbb{C})}\left(
\frac{ 
x\otimes E_{01} +(x\otimes E_{01})^*}{2}  +  i \frac{x\otimes E_{01} - (x\otimes E_{01})^* }{2i}
 \right)  \right)^* \nonumber \\
& =   K_{\mathcal{M}\otimes \mathbb{M}_2(\mathbb{C})}\left(
\frac{ 
x\otimes E_{01} +(x\otimes E_{01})^*}{2}  -  i \frac{x\otimes E_{01} - (x\otimes E_{01})^* }{2i}
 \right)  \\
 & =   K_{\mathcal{M}\otimes \mathbb{M}_2(\mathbb{C})}\left(
 \left(
 x\otimes E_{01}
 \right)^*  
 \right) \nonumber \\
 &=K_{\mathcal{M}\otimes \mathbb{M}_2(\mathbb{C})}(x^{\ast}\otimes E_{10}).\nonumber 
 \end{align}
Letting \begin{align}\label{oddeven}
x_k=
\begin{cases}
x^{\ast}\otimes E_{10},& k\mbox{ is odd};\\
x\otimes E_{01},& k\mbox{ is even}, 
\end{cases}
\end{align}
we have 
\begin{align} \begin{split}\label{sigmaM2}
&~\quad \sigma_{\mathcal{M}\otimes \mathbb{M}_2(\mathbb{C})}(|v(x)|^{2n})\\
&=\sigma_{\mathcal{M}\otimes \mathbb{M}_2(\mathbb{C})}(( v(x)^*v(x))^{ n})\\
&\stackrel{\eqref{adjointv} }{=}\sigma_{\mathcal{M}\otimes \mathbb{M}_2(\mathbb{C})}\left( 
\left( K_{\mathcal{M}\otimes \mathbb{M}_2(\mathbb{C})}\Big (x^{\ast}\otimes E_{10}\Big ) \cdot  K_{\mathcal{M}\otimes \mathbb{M}_2(\mathbb{C})}\Big(
x^{\ast}\otimes E_{01}\Big)\right ) ^{ n} \right)\\
&\stackrel{\eqref{oddeven}}{=} \sigma_{\mathcal{M}\otimes \mathbb{M}_2(\mathbb{C})}\left(\prod_{k=1}^{2n}K_{\mathcal{M}\otimes \mathbb{M}_2(\mathbb{C})}(x_k)\right).
\end{split}
\end{align}
Applying  Lemma \ref{mixed momenta formula} by replacing $\tau$ with $\tau\otimes{\rm Tr}$ (where ${\rm Tr} $ stands for the standard trace on $\mathbb{M}_2$), we have 
\begin{align*}
\sigma_{\mathcal{M}\otimes \mathbb{M}_2(\mathbb{C})}(|v(x)|^{2n})&\stackrel{\eqref{sigmaM2}}{=} \sigma_{\mathcal{M}\otimes \mathbb{M}_2(\mathbb{C})} \left(\prod_{k=1}^{2n}K_{\mathcal{M}\otimes \mathbb{M}_2(\mathbb{C})}(x_k)\right)\\
&\stackrel{\eqref{31}}{=}\sum_{\pi\in S_{2n}}\prod_{V\in\pi}(\tau\otimes{\rm Tr})\left(\prod_{k\in V}x_k \right).
\end{align*}

We claim that 
if $V=\left\{k_1,\cdots,k_m \right\} \in \pi $ is such that
$$(\tau\otimes{\rm Tr}) \left(\prod_{k\in V}x_k\right)\neq0,$$
then $m$ is even and $k_{l+1}$ has the opposite parity to $k_l$ for every $1\leq l<m.$ 
Indeed, if $k_{l+1}$ and $k_l$ have the same parity for some 
$1\leq l<m$, then $x_{k_{l}}x_{k_{l+1}} $  is equal  to either $$  (x^{\ast}\otimes E_{10})(x^{\ast}\otimes E_{10}) \mbox{ or }(x\otimes E_{01})(x\otimes E_{01})  ,$$ both of which are $0$. 
Hence, $k_{l+1}$ has the opposite parity to $k_l$ for every $1\leq l<m.$ 
If   $m$ is odd, then 
$$\prod_{k\in V}x_k = \left(  \big(x^*\otimes E_{10})(x\otimes E_{01} \big) \right)^{\frac{m-1}{2}} (x ^* \otimes E_{10  }) = |x|^\frac{m-1}{2}x^*\otimes E_{10  }$$
$$ \qquad \mbox{ or }\left(  \big(x \otimes E_{01})(x^* \otimes E_{10 }\big ) \right)^{\frac{m-1}{2}}  (x  \otimes E_{01 }) =  |x^*|^\frac{m-1}{2}x \otimes E_{01},$$ whose traces are $0$. 
This proves the claim. 
In other words, 
if  $$(\tau\otimes{\rm Tr})\left(\prod_{k\in V_n}x_k\right)\neq0 $$ for all $n\ge 1$, then $\pi$ is good.
Hence, 
\begin{align}\label{pro1}
\sigma_{\mathcal{M}\otimes \mathbb{M}_2(\mathbb{C})} \left( |v(x)|^{2n}\right )= 
%\sigma_{\mathcal{M}\otimes \mathbb{M}_2(\mathbb{C})} \Big(\prod_{k=1}^{2n}K_{\mathcal{M}\otimes \mathbb{M}_2(\mathbb{C})}(x_k)\Big)=
\sum_{\pi\in S_{2n}^{\rm good}}\prod_{V\in\pi}\left(\tau\otimes{\rm Tr}\right )\left(\prod_{k\in V}x_k\right).
\end{align}
Let $V= (k_1,\cdots,k_{|V|})\in \pi \in S_{2n}^{\rm good}$.   If $k_1$ is odd, then
\begin{align}\begin{split}\label{proodd}
\left( \tau\otimes{\rm Tr}\right)\left(\prod_{k\in V}x_k\right)&=( \tau\otimes {\rm Tr} )\left ((x^{\ast}x)^{\frac{|V|}{2}}\otimes \left(
                \begin{array}{cc}
                  0 & 0 \\
                  0 & 1 \\
                \end{array}
              \right) \right)  \\
              &=\tau\left(
(x^{\ast}x)^{\frac{|V|}{2}}\right)=\tau\left(|x|^{|V|}\right)
\end{split}
\end{align}
and if $k_1$ is even, then
\begin{align}\begin{split}\label{proeven}
(\tau\otimes{\rm Tr})\left(\prod_{k\in V}x_k\right)&=( \tau\otimes {\rm Tr} )\left((x x^{\ast})^{\frac{|V|}{2}}\otimes \left(
                \begin{array}{cc}
                  1 & 0 \\
                  0 & 0 \\
                \end{array}
              \right)  \right)  \\
& =\tau\left((xx^{\ast})^{\frac{|V|}{2}}\right) \stackrel{\mbox{\tiny \cite[Prop. 3.4.3]{DPS}}}{=} \tau\left(x^* (xx^{\ast})^{\frac{|V|}{2}}  x\right) \\
&=\tau\left(( x^{\ast}x )^{\frac{|V|}{2}}\right)  = \tau\left( |x|^{|V|} \right). 
\end{split}\end{align}
Combining \eqref{pro1}, \eqref{proodd} and \eqref{proeven}, the assertion follows.
\end{proof}

The following estimate follows from a  well-known result concerning   Bell numbers, see e.g. \cite[Theorem 2.1]{BT} and \cite{Bruijn}. 
\begin{lem}\label{moments of poisson} There exists $0<c\in\mathbb{R}$ such that
$$\sum_{\pi\in S_n}1\leq (cn)^n,\quad n\in\mathbb{N}.$$
\end{lem}
%\begin{proof} Let $K$ be the (commutative) Kruglov operator. We have
%$$\int (Kf)^n=\sum_{\pi\in S_n}\prod_{V\in\pi}\int f^{|V|}.$$
%Setting $f=\chi_{(0,1)},$ we obtain
%$$\int (K\chi_{(0,1)})^n=\sum_{\pi\in S_n}1.$$
%Since $K:\exp(L_1)(0,1)\to\exp(L_1)(0,1),$ it follows that $K\chi_{(0,1)}\in \exp(L_1)(0,1).$ Since the Orlicz norm in $\exp(L_1)(0,1)$ is equivalent to the norm
%$$f\to\sup_{n\in\mathbb{N}}\frac1n\|f\|_{L_n(0,1)},$$
%it follows that
%$$\|K\chi_{(0,1)}\|_{L_n(0,1)}=O(n),\quad n\in\mathbb{N}.$$
%Taking the $n$-th power, we complete the proof.
%\end{proof}

 The following results follows from a theorem  due to Carleman. %{\color{blue}It should be either in Sur les fonctions indefiniment derivables, C. R. Acad. Sci Paris, 177 (1923), 422--424 or in Les fonctions quasi analytiques, Collection de Monographies sur la Theorie des Fonctions, Gauthier--Villars, Paris, 1926. FIND PRECISE REFERENCE.}
 \begin{thm}[see e.g. \cite{Sjodin} or \cite{C1,Carleman}]\label{carleman theorem} Let $f_1$ and $f_2$ be measurable functions on $(0,1).$ If there exists a constant $c>0$ such that 
$$\int_0^1f_1^n dm =\int_0^1f_2^n dm =O((cn)^n),\quad n\in\mathbb{N},$$
then $f_1$ and $f_2$ are equimeasurable.
\end{thm}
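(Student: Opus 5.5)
The plan is to show that the moment growth hypothesis forces $f_1$ and $f_2$ to have finite exponential moments. Their characteristic functions are then real-analytic with Taylor coefficients given by the (common) moments, so they coincide, and the uniqueness theorem for characteristic functions gives equality of distributions. Throughout I view $(0,1)$ with Lebesgue measure as a probability space and $f_j$ as random variables. Equimeasurability ($d_{f_1}=d_{f_2}$) is then exactly equality of distributions.

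\emph{Step 1: absolute moments and exponential integrability.} The hypothesis only controls the signed moments $\int_0^1 f_j^n\,dm$, so I first pass to absolute moments through the even ones. By Cauchy--Schwarz,
$$\int_0^1|f_j|^n\,dm\le\Big(\int_0^1 f_j^{2n}\,dm\Big)^{1/2}\le C\,(2cn)^{n},$$
where $C$ is the implied constant in the $O$-bound. Since $n!\ge (n/e)^n$, it follows that
$$\int_0^1 e^{s|f_j|}\,dm=\sum_{n\ge0}\frac{s^n}{n!}\int_0^1|f_j|^n\,dm\le C\sum_{n\ge0}(2ces)^n<\infty$$
for $0\le s<s_0:=(2ce)^{-1}$. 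The interchange of sum and integral is justified by monotone convergence, since all terms are nonnegative.

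\emph{Step 2: analyticity of the characteristic functions.} Set $\varphi_j(z)=\int_0^1 e^{izf_j}\,dm$ for complex $z$ in the strip $|\operatorname{Im} z|<s_0$. The integrand satisfies $|e^{izf_j}|\le e^{s_0'|f_j|}$ for $|\operatorname{Im} z|\le s_0'<s_0$, which is integrable by Step 1. Hence, by dominated convergence and Morera's theorem (or differentiation under the integral sign), $\varphi_j$ is holomorphic in the strip. Moreover, at any real point $t_0$ its power series is
$$\varphi_j(t_0+h)=\sum_{n\ge0}\frac{(ih)^n}{n!}\int_0^1 f_j^n e^{it_0f_j}\,dm .$$
At $t_0=0$ the coefficients are $i^n\int_0^1 f_j^n\,dm/n!$, and these agree for $j=1,2$ by hypothesis. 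So $\varphi_1=\varphi_2$ on a disc around $0$. Since the strip is connected, the identity theorem yields $\varphi_1=\varphi_2$ on all of $\mathbb{R}$.

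\emph{Step 3: conclusion.} The characteristic function determines the distribution (L\'evy's inversion formula). Therefore $m(\{f_1>s\})=m(\{f_2>s\})$ for all $s$, that is, $f_1$ and $f_2$ are equimeasurable.

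The only genuinely delicate point is Step 1. One has to pass from the signed moments in the hypothesis to bounds on absolute moments; this is why the even moments are used, and it doubles the constant $c$. One also has to check that the resulting series converges, which uses Stirling's bound $n!\ge(n/e)^n$. Once exponential integrability in a neighbourhood is secured, the analyticity and uniqueness steps are standard. If one prefers to avoid complex analysis, an alternative for Steps 2--3 is the following: the moment generating functions $\int_0^1 e^{sf_j}\,dm$ are finite and given by the same convergent power series for $|s|<s_0$, and equality of moment generating functions on an open interval determines the law.
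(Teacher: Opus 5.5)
Your proof is correct. It takes a different and more elementary route than the paper, which gives no argument of its own and simply cites Carleman's theorem on determinacy of the moment problem.

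Carleman's criterion only needs $\sum_{n}\bigl(\int_0^1 f^{2n}\,dm\bigr)^{-1/(2n)}=\infty$. Its classical proof goes through quasi-analytic function theory. It therefore covers moment growth well beyond the stated hypothesis, for example $\bigl(\int_0^1 f^{2n}\,dm\bigr)^{1/(2n)}=O(n\log n)$.

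You instead use the fact that the hypothesis $O((cn)^n)$ sits exactly at the analytic threshold, $\bigl(\int_0^1 f^{2n}\,dm\bigr)^{1/(2n)}=O(n)$. From there your chain runs as follows:
\begin{enumerate}
\item Cauchy--Schwarz together with $n!\ge (n/e)^n$ gives $e^{s|f_j|}\in L_1$ for small $s>0$.
\item The characteristic functions therefore extend holomorphically to a strip, and their Taylor coefficients at $0$ are fixed by the common moments.
\item Uniqueness then follows from the identity theorem and L\'evy inversion, or alternatively from equality of moment generating functions on an interval.
\end{enumerate}

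What your route buys is a self-contained proof with no quasi-analyticity. What it gives up is generality the paper never uses. In Lemma~\ref{lemmav1} the moments are bounded by Bell numbers times $\|x\|^n_{(L_1\cap L_\infty)(\mathcal{M},\tau)}$, which is exactly the growth your argument handles.

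The only bookkeeping point is to take the implied constant $C\ge 1$, so that the $n=0$ term in your Step 1 is covered.
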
 

In what follows, we denote  $$r:=\chi_{(0,\frac12]}-\chi_{(\frac12,1]}.$$

\begin{lem}\label{lemmav1} Let $\cM$ be a  von Neumann algebra on a separable Hilbert space, equipped with a semifinite faithful normal trace $\tau$. 
If $x,y\in (L_1\cap L_{\infty})(\mathcal{M},\tau)$ are such that $\mu(x)=\mu(y),$ then $$\mu(v(x))=\mu(v(y)).$$
\end{lem}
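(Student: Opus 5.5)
Our plan is to compare the even moments of $|v(x)|$ and $|v(y)|$ and then invoke the Carleman theorem (Theorem \ref{carleman theorem}). By Lemma \ref{lemmagood},
$$\sigma_{\mathcal{M}\otimes \mathbb{M}_2(\mathbb{C})}\left(|v(x)|^{2n}\right)=\sum_{\pi\in S_{2n}^{\rm good}}\prod_{V\in\pi}\tau\left(|x|^{|V|}\right).$$
Since $\mu(x)=\mu(y)$, we have $\tau(|x|^{m})=\int_0^\infty\mu(t;x)^m\,dt=\tau(|y|^m)$ for every $m\ge 1$. These integrals are finite because $x\in L_1\cap L_\infty$: indeed $\tau(|x|^m)\le \norm{x}_\infty^{m-1}\norm{x}_{L_1}$. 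Hence $\sigma(|v(x)|^{2n})=\sigma(|v(y)|^{2n})$ for all $n\ge1$.

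Next we bound the growth of these moments. Set $a=\max\{\norm{x}_\infty,\norm{x}_{L_1},1\}$. For a good partition $\pi$ of $\{1,\dots,2n\}$, each factor satisfies $\tau(|x|^{|V|})\le a^{|V|}$, so the product is at most $a^{2n}$. Since $|S_{2n}^{\rm good}|\le |S_{2n}|\le (2cn)^{2n}$ by Lemma \ref{moments of poisson}, we obtain
$$\sigma(|v(x)|^{2n})\le (2ca\,n)^{2n}.$$
The same bound holds for $y$. In particular all moments are finite, so $v(x),v(y)\in\bigcap_{q<\infty}L_q(\cN,\sigma)$.

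To apply Theorem \ref{carleman theorem}, we pass to functions on $(0,1)$. Since $\sigma$ is a tracial state on $\cN$, the singular value function $\mu(v(x))$ lives on $(0,1)$ and is equimeasurable with $|v(x)|$ relative to $\sigma$. Put $f_1=\mu(v(x))^2$ and $f_2=\mu(v(y))^2$. Then
$$\int_0^1 f_i^n\,dm=\sigma(|v(\cdot)|^{2n}),$$
so the $n$-th moments of $f_1$ and $f_2$ coincide and are $O((c'n)^n)$, with $c'=4c^2a^2$ after adjusting constants. Theorem \ref{carleman theorem} then shows that $f_1$ and $f_2$ are equimeasurable, and therefore $\mu(v(x))=\mu(v(y))$.

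The main obstacle is making sure the hypothesis of Carleman's theorem is met with the right growth rate. Moments of $|v|^2$ of order $n$ correspond to moments of $|v|$ of order $2n$, and the bound above must be read as $O((Cn)^n)$ for the $n$-th moment of $f_i=\mu^2$. This holds because $(2ca\,n)^{2n}=\big((2ca)^2 n^2\big)^n$. If that rate turns out to be too weak for the version of Theorem \ref{carleman theorem} being used, we would instead apply Carleman's criterion directly in the form $\sum_n m_{2n}^{-1/(2n)}=\infty$ for the symmetric variable $\mu(v(x))\cdot r$. Multiplying by $r$ symmetrizes the distribution, so its odd moments vanish and its even moments are those computed above, of size $(Cn)^{2n}$; this gives $m_{2n}^{-1/(2n)}\sim 1/(Cn)$, whose sum diverges. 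The symbol $r$ is introduced just before the lemma presumably for exactly this purpose. Applying Theorem \ref{carleman theorem} to $\mu(v(x))r$ and $\mu(v(y))r$ (or to their symmetric distributions) gives equimeasurability, and taking absolute values yields $\mu(v(x))=\mu(v(y))$.
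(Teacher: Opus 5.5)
Your main route, with $f_i=\mu(v(\cdot))^2$, breaks at the growth step. What you have actually proved is
$\int_0^1 f_i^n\,dm=\sigma(|v|^{2n})\le (2ca\,n)^{2n}=\big((2ca)^2n^2\big)^n$. That is $O((Cn^2)^n)$, not $O((Cn)^n)$, so your sentence ``this holds because $(2can)^{2n}=((2ca)^2n^2)^n$'' shows the opposite of what you want. Adjusting constants does not help. Take $x$ a projection with $\tau(x)=1$: the right-hand side of Lemma \ref{lemmagood} is then the number of good partitions of $\{1,\dots,2n\}$, a Bell-type number of size $n^{2n(1-o(1))}$. So the $n$-th moments of $\mu(v(x))^2$ genuinely fail the hypothesis of Theorem \ref{carleman theorem} as stated. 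You could rescue this route with the Stieltjes form of Carleman's criterion ($\sum_n m_n^{-1/(2n)}=\infty$ for nonnegative variables), but that is not the theorem quoted in the paper.

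Your fallback is the right idea and is what the paper does, but the function you write down is wrong. The pointwise product $t\mapsto \mu(t;v(x))\,r(t)$ is not symmetrically distributed, because $\mu$ is decreasing. Its odd moments $\int_0^{1/2}\mu(t;v(x))^m\,dt-\int_{1/2}^1\mu(t;v(x))^m\,dt$ are nonzero in general. Lemma \ref{lemmagood} says nothing about them, so you cannot verify that they agree for $x$ and $y$. The paper instead uses $f_1(t)=\mu(\{2t\};v(x))\,r(t)$: a compressed copy of $\mu(v(x))$ on each half of $(0,1)$ with opposite signs, equivalently $\mu(v(x))\otimes r$ on $(0,1)^2$. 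With this choice:
\begin{itemize}
\item all odd moments vanish identically;
\item the $2n$-th moment equals $\int_0^1\mu(t;v(x))^{2n}\,dt=\sigma(|v(x)|^{2n})$;
\item these moments coincide for $x$ and $y$ by Lemma \ref{lemmagood} together with $\tau(|x|^m)=\tau(|y|^m)$;
\item your bound $(2ca\,n)^{2n}=(ca\,m)^m$ with $m=2n$ is exactly the hypothesis of Theorem \ref{carleman theorem}.
\end{itemize}
Since $|f_1|$ is equimeasurable with $\mu(v(x))$, you then conclude $\mu(v(x))=\mu(v(y))$. Your parenthetical ``or to their symmetric distributions'' points at this correction, and with it your argument coincides with the paper's proof.
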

\begin{proof} Consider two measurable functions $f_1$ and $f_2$ on $(0,1)$ defined by  formulas
$$f_1(t)=\mu\left(\{2t\};v(x)\right)\cdot r(t),\quad f_2(t)=\mu\left( \{2t\}; v(y)\right)\cdot r(t),\quad t\in (0,1).$$
Here, $\{\cdot\}$ denotes the fractional part. 

If $n$ is odd, then 
\begin{align*}
\int_{(0,1)}f_1^n dm &  =\int_0^1 \mu\left(\left\{2t\right\};v(x)\right)^n \cdot r(t) dt \\
&=\int_0^\frac12 \mu\left(2t;v(x)\right)^n  dt -\int_\frac12^1 \mu\left(2t-1;v(x)\right)^n  dt  \\
&=\frac12 \int_0^1 \mu\left( t;v(x)\right)^n  dt -\frac12 \int_0^1 \mu\left(t;v(x)\right)^n  dt   
=0 
\end{align*}
and, similarly, 
$$\int_{(0,1)}f_2^ndm=0. $$
If $n$ is even, then it follows from Lemma \ref{lemmagood} (applied with $\frac{n}{2}$ instead of $n$) that
\begin{align*}
\int_{(0,1)}f_1^ndm  &~\qquad =~\qquad\int_0^1\mu(\{2t\};v(x))^n dt \\
&~\qquad=~\qquad\int_0^\frac12 \mu\left(2t;v(x)\right)^n  dt +\int_\frac12^1 \mu\left(2t-1;v(x)\right)^n  dt  \\
&~\qquad=~\qquad\frac12 \int_0^1 \mu\left( t;v(x)\right)^n  dt +\frac12 \int_0^1 \mu\left(t;v(x)\right)^n  dt   \\
&~\qquad=~\qquad\int_0^1\mu (t;v(x))^ndt \\
&~\qquad=~\qquad\sigma_{\mathcal{M}\otimes \mathbb{M}_2(\mathbb{C})}(|v(x)|^n)\\
&\qquad \stackrel{\eqref{eqlemmagood}}{=} \qquad\sum_{\pi\in S_n^{{\rm good}}}\prod_{V\in\pi}\tau(|x|^{|V|})\\
& \stackrel{\mbox{\tiny \cite[Prop.3.4.5]{DPS}}}{\le} \sum_{\pi\in S_n^{{\rm good}}}\prod_{V\in\pi}\norm{ x }_\cM^{|V|-1}  \tau(|x|)\\
&\qquad  \stackrel{\eqref{def:L1}}{\leq} \qquad\sum_{\pi\in S_n^{{\rm good}}}\prod_{V\in\pi}\left\|x\right\|_{ (L_1\cap L_{\infty})(\mathcal{M},\tau)}^{|V|}\\
&~\qquad=~\qquad\sum_{\pi\in S_n^{{\rm good}}}\left\|x\right\|_{ (L_1\cap L_{\infty})(\mathcal{M},\tau)}^n\\
&~\qquad\leq~\qquad \Big(\sum_{\pi\in S_n}1\Big)\cdot \left\|x\right\|_{ (L_1\cap L_{\infty})(\mathcal{M},\tau)}^n,
\end{align*}
and, similarly, \begin{align*}
\int_{(0,1)}f_2^ndm   \leq  \Big(\sum_{\pi\in S_n}1\Big)\cdot \left\|y\right\|_{ (L_1\cap L_{\infty})(\mathcal{M},\tau)}^n,
\end{align*}
By Lemma \ref{moments of poisson}, we have 
$$\int_{(0,1)}f_1^n dm =\int_{(0,1)}f_2^n dm =O((c\left\|x\right\|_{ (L_1\cap L_{\infty})(\mathcal{M},\tau) }n)^n),\quad n\in\mathbb{N}.$$
It follows now from Theorem \ref{carleman theorem}
that $f_1$ and $f_2$ are equimeasurable.
By the definitions of $f_1$ and $f_2$, we have  $\mu(v(x))=\mu(v(y)).$ The proof is complete. 
\end{proof}

Recall that $(L_p+L_2)(0,\infty )$ is the separable symmetric quasi-Banach function space defined by 
$$(L_p+L_2)(0,\infty ):=\left\{f \in S(0,\infty): \norm{f}_{L_p+L_2}:= \inf \{\norm{h}_{L_p} +\norm{g}_{L_2}:f=h+g\} \right\}.$$

Lemma \ref{lemmav1} above only treats  operators in $(L_1\cap L_{\infty})(\mathcal{M},\tau)$. 
The following lemma should be compared with Lemma \ref{lemmav1}, which allows us to consider operators in $(L_p+L_2)(\mathcal{M}_1,\tau_1 )$. 
\begin{lem}\label{lemmav} Let $\cM_1$ be a  von Neumann algebra on a separable Hilbert space, equipped with a semifinite faithful normal trace $\tau_1$.   
Let $\mathcal{M}=\mathcal{M}_1\bar{\otimes}L_{\infty}(0,1)$ and $\tau=\tau_1\otimes\int.$ 
If $0<p\le 2$, then, for every $x,y\in (L_p+L_2)(\mathcal{M}_1,\tau_1 )$ with $\mu(x)=\mu(y),$ we have $$\mu(v(x\otimes r))=\mu(v(y\otimes r)),$$
where $v(z)=K_{\mathcal{M}\bar{\otimes} \mathbb{M}_2(\mathbb{C})}(z \otimes E_{01})$, $z \in (L_p+L_2)(\cM,\tau)$. 
\end{lem}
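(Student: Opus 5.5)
The plan is to reduce to Lemma~\ref{lemmav1} by approximation. The tensor factor $r$ is there to make the Kruglov image well defined: $K$ is a priori defined only on $L_1$. So the first task is to show that $z\mapsto v(z\otimes r)$ extends continuously from $(L_1\cap L_\infty)(\mathcal{M}_1,\tau_1)$ to $(L_p+L_2)(\mathcal{M}_1,\tau_1)$, with values in $S(\mathcal{N},\sigma)$ carrying the measure topology. Once this is in place, the statement follows. Given $x,y$ with $\mu(x)=\mu(y)$, I would choose approximants $x_k,y_k\in(L_1\cap L_\infty)(\mathcal{M}_1,\tau_1)$ with $\mu(x_k)=\mu(y_k)$. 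Concretely, take $x_k=x\,e^{|x|}(1/k,k)$ and $y_k=y\,e^{|y|}(1/k,k)$. These are truncations determined by the spectral data of $|x|$ and $|y|$, so equimeasurability is preserved, and $x_k\to x$, $y_k\to y$ in $L_p+L_2$. Lemma~\ref{lemmav1} applied in $\mathcal{M}$ to $x_k\otimes r$ and $y_k\otimes r$ gives $\mu(v(x_k\otimes r))=\mu(v(y_k\otimes r))$. Convergence in measure then transfers this to the limits, since $\mu(t;\cdot)$ is continuous at continuity points under convergence in measure.

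The core estimate is a bound of the form
\[
\sigma\bigl(|v(z\otimes r)|^2\bigr)\le C\,\|z\|_{L_2}^2 \quad (z\in L_1\cap L_\infty),
\]
together with an $L_p$-type quasi-norm bound for $p<2$. The $L_2$ bound follows from Lemma~\ref{lemmagood} with $n=1$. The only good partition of $\{1,2\}$ is $\{\{1,2\}\}$, so $\sigma(|v(z\otimes r)|^2)=(\tau_1\otimes\int)(|z\otimes r|^2)=\tau_1(|z|^2)$. The role of $r$ is exactly that $\int r=0$ kills the mean terms; for $|v|^2$ those terms already vanish, though. The real use of $r$ appears at the $L_p$ end.

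For the $L_p$ part, with $p\le 2$, I would compare against the commutative picture. For $z$ in $L_1\cap L_\infty$, the operator $z\otimes r$ is self-adjoint-type after the $2\times 2$ trick. Using property (1) of the Kruglov operator in Section~\ref{S:k}, together with Lemma~\ref{lemmav1}, the distribution of $|v(z\otimes r)|$ depends only on $\mu(z)$. So it coincides with the distribution of $|v(f\otimes r)|$ computed for a function $f=\mu(z)$ in the commutative algebra $L_\infty(0,\infty)$. There $f\otimes r$ is symmetrically distributed, and $v$ is essentially a compound Poisson variable with symmetric jumps. For such variables the classical estimate $\|K(g)\|_{L_p}\lesssim\|g\|_{L_p+L_2}$ holds for symmetric $g$, because the characteristic function is $\exp(\int(\cos(tg)-1))$ and $p$-moments can be controlled.

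Then for a Cauchy sequence $z_k$ in $L_p+L_2$, I would apply this to the differences $z_k-z_j$ via linearity of $v$. This is legitimate because differences are again in $L_1\cap L_\infty$, and the distributional bound only needs $\mu(z_k-z_j)$. The result is a Cauchy sequence in $L_p(\mathcal{N})$, hence in measure, which defines the extension.

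The main obstacle I expect is precisely this $L_p$ ($p<2$) continuity estimate for the Kruglov image of symmetric variables in the noncommutative setting. My first attempt would be the reduction above to the commutative case via equimeasurability, and then an appeal to the known commutative bound $K:(L_p+L_2)_{\rm sym}\to L_p$. If that fails, I would work directly with the moment formula of Lemma~\ref{lemmagood} to obtain weak-type control. For instance, I would split $z$ into a part $z\,e^{|z|}(1,\infty)$, which has finite support, so its image lives on a set of small measure, and a bounded part, which is handled by the $L_2$ bound.
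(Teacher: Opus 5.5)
Your skeleton is the same as the paper's. You truncate $x,y$ spectrally so that the truncations stay equimeasurable and lie in $(L_1\cap L_\infty)(\cM_1,\tau_1)$; your $xe^{|x|}(1/k,k)$ works as well as the paper's dyadic step functions. You then apply Lemma~\ref{lemmav1} in $\cM=\cM_1\bar{\otimes}L_\infty(0,1)$ to $x_k\otimes r$ and $y_k\otimes r$, and pass to the limit using continuity of $z\mapsto v(z\otimes r)$ and of $\mu(t;\cdot)$. The only real difference is how that continuity is obtained. The paper simply quotes Proposition~\ref{propbounded}, proved in the appendix: a self-adjoint $x$ is cut into disjoint pieces, their Kruglov images are independent, the Johnson--Schechtman inequality (Lemma~\ref{A37}) and Lemma~\ref{A36} are applied, and a general $x$ is split into real and imaginary parts. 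This gives a two-sided $L_p(\cN,\sigma)$ estimate.

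Your first route to this step does not work as written. Even for a function $f$, $v(f\otimes r)=K(f\otimes r\otimes E_{01})$ is a non-self-adjoint element of the Kruglov algebra of $L_\infty(0,\infty)\bar{\otimes}L_\infty(0,1)\bar{\otimes}\mathbb{M}_2(\mathbb{C})$, not a compound Poisson variable. Moreover, $|v(f\otimes r)|$ is not distributed like $|K(g)|$ for symmetric $g$ with $\mu(g)=\mu(f)$: Lemma~\ref{lemmagood} gives $\sigma(|v(x)|^4)=\tau(|x|^4)+2\tau(|x|^2)^2$, whereas $\int(Kg)^4=\int g^4+3(\int g^2)^2$. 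To use a scalar bound you must first split $z\otimes r\otimes E_{01}$ into its real and imaginary parts, which are self-adjoint and symmetrically distributed, as in step (2) of Proposition~\ref{propbounded}. For those parts your characteristic-function remark can be made rigorous for $0<p<2$ via $|s|^p=c_p\int_0^\infty(1-\cos ts)\,t^{-1-p}\,dt$ and $\sigma(e^{itK(a)})=\exp(\tau(\cos(ta)-1))$, which would bypass the Johnson--Schechtman machinery. Also, Lemma~\ref{lemmav1} compares elements of a single algebra, so passing to $L_\infty(0,\infty)$ requires rerunning its moment--Carleman proof.

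Your fallback, on the other hand, is correct and more economical than the paper's argument, because this lemma only needs continuity into $S(\cN,\sigma)$ with the measure topology. If $\norm{z}_{L_p+L_2}\to0$, then $\tau_1(e^{|z|}(1,\infty))\to0$ and $\norm{ze^{|z|}[0,1]}_{L_2}\to0$; on the $L_p$-part use $\min(1,\mu)^2\le\min(1,\mu)^p$ for $p\le2$. The bounded piece is handled by $\sigma(|v(w\otimes r)|^2)=\tau_1(|w|^2)$, i.e. Lemma~\ref{lemmagood} with $n=1$. The other piece sits under a projection $Q$ of trace $O(\tau_1(e^{|z|}(1,\infty)))$. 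By property (4) of Section~\ref{S:k} and the finite-trace formulas in Step 1 of the proof of Lemma~\ref{mixed momenta formula}, its Kruglov image vanishes on the $l=0$ summand, so it is supported on a projection of $\sigma$-measure at most $1-e^{-(\tau\otimes{\rm Tr})(Q)}$.

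This route never uses $r$ and gives only continuity in measure. That is enough here, and the resulting extension agrees with the paper's $L_p$-extension, since both are limits of the same approximants. The paper needs the stronger Proposition~\ref{propbounded} anyway in the proof of Theorem~\ref{main}: it is used for the interpolation to $L_{p,\infty}$ and to see that $v(A\otimes d_p\otimes r)\neq0$.
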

\begin{proof} 
 
Note that 
$\cM \bar{\otimes} \mathbb{M}_2(\mathbb{C}) = \cM_1\bar{\otimes}  L_{\infty}(0,1)  \bar{\otimes} \mathbb{M}_2(\mathbb{C})  $. 
By Proposition \ref{propbounded},  the restriction $K_{\mathcal{M}\bar{\otimes} \mathbb{M}_2(\mathbb{C})} (\cdot \otimes r ) $ of   
the Kruglov operator $K_{\mathcal{M}\bar{\otimes} \mathbb{M}_2(\mathbb{C})}   $ is bounded from 
 $(L_p+L_2) (\mathcal{M}_1 \bar{\otimes} \mathbb{M}_2 (\mathbb{C}),\tau_1\otimes {\rm Tr })$ to $L_p(\mathcal{N}_{\mathcal{M}\bar{\otimes} \mathbb{M}_2(\mathbb{C})},\sigma_{\mathcal{M}\bar{\otimes}  \mathbb{M}_2(\mathbb{C})})$. 
By  the definition of $v$,
the mapping $$z\mapsto  v(z\otimes r)$$ is bounded   from $(L_p+L_2) (\mathcal{M}_1,\tau_1)$ to $L_p(\mathcal{N}_{\mathcal{M}\bar{\otimes} \mathbb{M}_2(\mathbb{C})},\sigma_{\mathcal{M}\bar{\otimes}  \mathbb{M}_2(\mathbb{C})})$.

Let $x,y\in (L_p+L_2) (\mathcal{M}_1,\tau_1  )$ be such that $|x|$ and $|y|$ are equimeasurable. 
Let $x=u_x|x|$ and $y=u_y|y|$ be the polar decompositions. 
Define 
$$x_n:= u_x \sum_{k=0}^{n \cdot 2^ n  } \frac{k}{2^ n} e^{|x|}\left(
\frac{k}{2^ n},\frac{k+1}{2^ n}\right], ~ y_n:= u_y \sum_{k=0}^{n \cdot 2^ n } \frac{k}{2^ n} e^{|y|}\left(
\frac{k}{2^ n},\frac{k+1}{2^ n}\right].$$
In particular, $x_n$ and $y_n$ have $\tau$-finite left and right support projections, and 
$$x_n\to x,~y_n\to y$$
  in $(L_p+L_2)(\mathcal{M}_1,\tau_1  )$ as $n\to\infty.$ 
  Hence, 
$$v(x_n\otimes r)\to v(x\otimes r)\mbox{ and } v(y_n\otimes r)\to v(y\otimes r)$$ 
as $n\to \infty $
in $L_p(\mathcal{N}_{\mathcal{M}\bar{\otimes} \mathbb{M}_2(\mathbb{C})},\sigma_{\mathcal{M}\bar{\otimes} \mathbb{M}_2(\mathbb{C})}).$ In particular,
$$\mu(v(x_n\otimes r))\to\mu(v(x\otimes r)),\quad \mu(v(y_n\otimes r))\to\mu(v(y\otimes r)),\quad n\to\infty,$$
in $L_p(0,1)$ (see e.g. \cite[Theorem 5.2]{CS94} or \cite[Proposition 52(iii)]{DP2}). 
Noting that $\mu (x_n\otimes r)$ and $\mu(y_n\otimes r)$ are equimeasurable for every $n\geq0,$ 
and appealing to   Lemma~\ref{lemmav1}, we obtain 
$$\mu(v(x_n\otimes r))=\mu(v(y_n\otimes r)) $$  for every $n\geq0,$
which implies that  
$$ \mu(v(x\otimes r))= \mu(v(y\otimes r)).$$
This proof is complete. 
\end{proof}

Now, we are ready to prove the main result of this paper. 
 
\begin{proof}[Proof of Theorem \ref{main}]
 Set $d_p(t)=t^{-\frac1p}$, $t\in (0,\infty )$.
Below, we present the proofs for  the case of semifinite von Neumann algebras and the case of general von Neumann algebras separately. 
The second case is clearly enough for our purpose to prove Theorem \ref{main}. However,
  the proof for the first case given below allows us to obtain a  smaller algebra $\cN$. 
 \begin{enumerate}
   \item   Assume that $\cM_0$ is semifinite.
Without loss of generality, we may assume that $\cM_0$ is atomless\cite[Lemma 2.3.18]{LSZ}. 

 Let $$\mathcal{M}_1=\mathcal{M}_0\bar{\otimes}L_{\infty}(0,\infty)\bar{\otimes}L_{\infty}(0,1) \mbox{ and } \tau_1=\tau_0\otimes\int\otimes\int.$$

   \item    Assume that $\cM_0$ is a general von Neumann algebra (in particular, type $III$).
Let $\mathfrak{M}_0 $ be the semifinite von Neumann algebra defined in Section \ref{Haagerup} such that 
$L_p(\cM_0)$ is isometric to a subspace of $L_{p,\infty}(\mathfrak{M}_0,\tau_0)$ with   
 \begin{align}\label{Ha0}
 \mu(t;x)\stackrel{\eqref{haLp}}{=}\norm{x}_{L_p(\cM_0)}\cdot t^{-\frac1p},~ \forall x\in L_p(\cM_0), ~\forall~  t>0. 
 \end{align} 
 Let $$\mathcal{M}_1=\mathfrak{M}_0\bar{\otimes}L_{\infty}(0,\infty)\bar{\otimes}L_{\infty}(0,1) \mbox{ and } \tau_1=\tau_0\otimes\int\otimes\int.$$ 
 \end{enumerate}

Let   $K_{\cM_1\bar{\otimes} \mathbb{M}_2}$ 
 be the Kruglov operator from 
  $L_1\left(\cM_1 \bar{\otimes} \mathbb{M}_2,\tau_1\otimes {\rm Tr}\right)$ into $L_1\left(\cN_{\cM_1 \bar{\otimes}   \mathbb{M}_2},\sigma_{\cM\bar{\otimes} \mathbb{M}_2}\right)$. By Proposition \ref{propbounded} (by taking $\cM= \cM_0\bar{\otimes}L_{\infty}(0,\infty)  \bar{\otimes} \mathbb{M}_2 $ or  $\cM= \mathfrak{M}_0\bar{\otimes}L_{\infty}(0,\infty)  \bar{\otimes} \mathbb{M}_2 $),  the operator $K_{\cM_1 \bar{\otimes} \mathbb{M}_2 }(\cdot \otimes r)  : L_1 \left(\mathcal{M},\tau_0\otimes \int \otimes {\rm Tr}  \right)\to  L_1\left(\cN_{\cM_1 \bar{\otimes}   \mathbb{M}_2},\sigma_{\cM_1\bar{\otimes} \mathbb{M}_2}\right) $\footnote{Note that $\cM_1 \bar{\otimes}\mathbb{M}_2  =\cM\bar{\otimes}L_\infty (0,1) $  and $K_{\cM_1 \bar{\otimes} \mathbb{M}_2 }=K_{\cM \bar{\otimes} L_\infty (0,1) }$.  }  extends to 
 \begin{itemize}
   \item  an  isomorphic embedding from   $(L_{p/2}+L_2) \left(\mathcal{M},\tau_0\otimes \int \otimes {\rm Tr}  \right)$ into $L_{p/2}\left(\cN_{\cM_1 \bar{\otimes}   \mathbb{M}_2},\sigma_{\cM_1\bar{\otimes} \mathbb{M}_2}\right)$, 
   \item an  isomorphic embedding from $ L_2  \left(\mathcal{M} ,\tau_0\otimes \int \otimes {\rm Tr}  \right)$ into $L_2\left(\cN_{\cM_1 \bar{\otimes}   \mathbb{M}_2},\sigma_{\cM_1 \bar{\otimes} \mathbb{M}_2} \right)$. 
 \end{itemize}

Let  $v(z)=K_{\mathcal{M}_1\bar{\otimes}  \mathbb{M}_2(\mathbb{C})}(z\otimes E_{01})$, $z\in (L_{p/2}+L_2) (\cM_1,\tau_1) $.
\begin{enumerate}
\item If $\cM_0$ is semifinite, then 
  for any $x\in L_p(\cM_0,\tau_0)$,
we have \begin{align*}
\mu\left(|x\otimes d_p|\right)=\mu\left(|x|\otimes d_p\right)&~\quad =\quad \mu\left(\mu(x)\otimes d_p\right)\\
&\stackrel{\tiny\mbox{\cite[p.211]{LT2}} }{=}\left\|\mu(x)\right\|_{L_p(0,\infty )}d_p=\left\|x\right\|_{L_p(\cM_0,\tau_0)}d_p.
\end{align*}
For an arbitrary projection $A\in \cM_0$ with trace $1$, we have$$\mu(x\otimes d_p) =\left\|x\right\|_{L_p(\cM_0,\tau_0)} \mu(d_p)=\left\|x\right\|_{L_p(\cM_0,\tau_0)} \mu(A\otimes d_p) . $$
Since $d_p\in  (L_{p/2}+L_2) (0,\infty )$ (see e.g. \cite[p.217]{Bennett_S}), it follows that  $\mu(x\otimes d_p ) \in  (L_{p/2}+L_2)   (0,\infty )$, and, by   Lemma~\ref{lemmav}, we have 
\begin{align}\label{iso1}
\mu\left(v(x\otimes d_p\otimes r)\right)= \left\|x\right\|_{L_p(\cM_0,\tau_0)} \mu\left(v(A\otimes d_p\otimes r)\right).
\end{align}  
\item If $\cM_0$ is a general von Neumann algebra (in particular, type $III$), then    for any $x\in L_p(\cM_0)$ and for
%we have \begin{align*}
%\mu\left(x\right) \stackrel{\eqref{Ha0}}{=} \left\|x\right\|_{L_p(\cM_0)}d_p.
%\end{align*}
  an  arbitrary projection $A\in \mathfrak{M} _0$ with trace $1$, 
  we have
  $$\mu\left(x \otimes \chi_{(0,1)}\right) =\mu\left(x  \right) \stackrel{\eqref{Ha0}}{=}  \left\|x\right\|_{L_p(\cM_0)} \cdot \mu(d_p)=\left\|x\right\|_{L_p(\cM_0 )} \cdot \mu(A\otimes d_p)
   . 
  $$ 
Since $d_p\in  (L_{p/2}+L_2) (0,\infty )$, it follows that
   $\mu\left(x \otimes \chi_{(0,1)}\right)  \in  (L_{p/2}+L_2) (0,\infty )$, and, by    Lemma~\ref{lemmav}, we have 
\begin{align}\label{iso2}
\mu\left(v(x\otimes\chi_{(0,1)} \otimes  r)\right)= \left\|x\right\|_{L_p(\cM_0)} \mu\left(v(A\otimes  d_p\otimes r)\right).
\end{align} 
\end{enumerate}

Since $L_{p,\infty}$ is an interpolation space between $L_{p/2}$ and $L_2$   
 whenever $0<  p<2$~\cite[Theorem 4.3]{Bennett_S}, 
it follows from \cite[Theorem 3.11]{Dirksen} that
 $K_{\cM
 _1\bar{\otimes} \mathbb{M}_2}$ is bounded from $L_{p,\infty}(\mathcal{M}_1 \bar{\otimes} \mathbb{M}_2,\tau_1\otimes {\rm Tr})$ to $L_{p,\infty}(\mathcal{N}_{\mathcal{M}_1\bar{\otimes} \mathbb{M}_2(\mathbb{C})},\sigma_{\mathcal{M}_1\bar{\otimes} \mathbb{M}_2(\mathbb{C})})$, 
 and therefore, 
the mapping $$z\mapsto v(z\otimes r)=K_{\mathcal{M}_1\bar{\otimes}  \mathbb{M}_2(\mathbb{C})}(z\otimes r\otimes E_{01}), ~z\in (L_{p/2}+L_2)\left(\mathfrak{M} _0\bar{\otimes}  L_\infty (0,\infty),\tau_0\otimes \int \right)$$ 
is bounded from $L_{p,\infty}\left(\mathfrak{M} _0\bar{\otimes}  L_\infty (0,\infty),\tau_0\otimes \int \right) $ to $L_{p,\infty}\left(\mathcal{N}_{\mathcal{M}_1 \bar{\otimes}  \mathbb{M}_2(\mathbb{C})},\sigma_{\mathcal{M}_1\bar{\otimes} \mathbb{M}_2(\mathbb{C})}\right).$
Since $E(0,1)$ contains $d_p$, it follows that $E(0,1)\supset L_{p,\infty}(0,1)$ and
$$\norm{v(A\otimes d_p\otimes r)}_{E}\stackrel{\tiny \mbox{\cite[Lemma I.3.3]{KPS}}}{\le} \norm{v(A\otimes d_p\otimes r)}_{L_{p,\infty}}<\infty.$$
 On the other hand, 
 since  $K_{\cM_1 \bar{\otimes} \mathbb{M}_2 }(\cdot \otimes r)$ is  an  isomorphic embedding from 
  $(L_{p/2}+L_2)\left (
  \mathcal{M} ,\tau_0\otimes \int \otimes {\rm Tr} \right )$ into $L_{p/2}\left(\cN_{\cM_1 \bar{\otimes}   \mathbb{M}_2},\sigma_{\cM_1\bar{\otimes} \mathbb{M}_2}\right) $, it follows that  
  $$v(A\otimes d_p\otimes r) =K_{\mathcal{M}_1\bar{\otimes}  \mathbb{M}_2(\mathbb{C})}(A\otimes d_p\otimes r  \otimes  E_{01})\ne 0.$$  By \eqref{iso1} (respectively, \eqref{iso2}), the mapping
$$x\mapsto \frac{1}{\norm{v(A\otimes d_p\otimes r)}_E}v(x\otimes d_p \otimes r) $$
(respectively, $$x\mapsto \frac{1}{\norm{v(A\otimes d_p\otimes r)}_E}v(x\otimes \chi_{(0,1)}\otimes r)\quad ) $$
is an isometry from $L_p(\cM_0)$ into $E\left(\mathcal{N}_{\mathcal{M}_1 \bar{\otimes}  \mathbb{M}_2(\mathbb{C})},\sigma_{\mathcal{M}_1\bar{\otimes} \mathbb{M}_2(\mathbb{C})}\right) $. 
\end{proof}
 
For a hyperfinite von Neumann algebra $\cM$, the crossed product constructed in Section \ref{Haagerup} is also hyperfinite \cite[Theorem XV.3.16]{Tak3} (see also \cite{AD}). 
Recall that for any hyperfinite von Neumann algebra, the algebra $\cN$ can be chosen to be hyperfinite, which is $*$-isomorphic to a subalgebra of $\cR$, the hyperfinite $II_1$-factor, see e.g. the proof of \cite[Theorem 5.2]{hrs}. 
We have the following consequence of Theorem \ref{main}, see also \cite[Corollary 34]{JSZ} for a similar argument. 
\begin{cor}
Let $0 <  p<2 $. 
 Let $\cM$ be a hyperfinite von Neumann algebra on a separable Hilbert space.  There exists an isometric embedding of $L_p(\mathcal{M},\tau)$ into $E(\mathcal{R},\tau_\cR )$ for the hyperfinite $II_1$-factor $\cR$ provided that 
$E(0,1)\supset L_{p,\infty}(0,1).$
\end{cor}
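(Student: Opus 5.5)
The plan is to deduce the corollary directly from the proof of Theorem~\ref{main}, keeping track of hyperfiniteness of every algebra involved and then passing from the resulting hyperfinite finite algebra into $\cR$.

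\textbf{Step 1: the semifinite model is hyperfinite.} We run the argument of Theorem~\ref{main} in the general (possibly type $III$) case. If $\cM$ is hyperfinite, then the crossed product $\mathfrak{M}_0=\cM\rtimes_{\sigma^{\phi_0}}\mathbb{R}$ of Section~\ref{Haagerup} is hyperfinite by \cite[Theorem XV.3.16]{Tak3}. Tensoring with the abelian algebras $L_\infty(0,\infty)$ and $L_\infty(0,1)$ and with $\mathbb{M}_2(\mathbb{C})$ preserves hyperfiniteness and separability of the predual. Hence
$$\cM_1\bar{\otimes}\mathbb{M}_2=\mathfrak{M}_0\bar{\otimes}L_\infty(0,\infty)\bar{\otimes}L_\infty(0,1)\bar{\otimes}\mathbb{M}_2$$
is a hyperfinite semifinite algebra with separable predual.

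\textbf{Step 2: the target algebra is hyperfinite and sits in $\cR$.} By \cite[Theorem 32]{JSZ}, the Kruglov algebra $\cN:=\cN_{\cM_1\bar{\otimes}\mathbb{M}_2}$ can be taken hyperfinite. It is finite, carries the faithful normal tracial state $\sigma:=\sigma_{\cM_1\bar{\otimes}\mathbb{M}_2}$, and has separable predual. By the argument in the proof of \cite[Theorem 5.2]{hrs}, there is a trace-preserving $*$-isomorphism $\iota$ of $(\cN,\sigma)$ onto a von Neumann subalgebra of $(\cR,\tau_\cR)$. If $\cN$ happens to have atoms, we first tensor with $L_\infty(0,1)$ to make it diffuse; this does not affect singular value functions.

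\textbf{Step 3: transfer the isometry.} Extend $\iota$ to measurable operators. A trace-preserving normal $*$-isomorphism onto a subalgebra preserves distributions, so $\mu(\iota(z))=\mu(z)$ for all $z\in S(\cN,\sigma)$. Consequently $\iota$ is an isometry from $E(\cN,\sigma)$ into $E(\cR,\tau_\cR)$. Composing it with the isometry
$$x\mapsto \norm{v(A\otimes d_p\otimes r)}_E^{-1}\,v(x\otimes\chi_{(0,1)}\otimes r)$$
from the proof of Theorem~\ref{main} gives the desired embedding. The hypothesis $E(0,1)\supset L_{p,\infty}(0,1)$ is equivalent to $d_p\in E(0,1)$, which is exactly what that proof uses. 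If $\cM$ is semifinite, the first case of the proof works equally well, using $\cM_0$ in place of $\mathfrak{M}_0$.

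\textbf{Main obstacle.} The delicate point is Step 2: certifying that the Kruglov construction, which is an infinite direct sum of tensor powers weighted as in \cite[(18)]{JSZ}, yields a hyperfinite algebra that embeds trace-preservingly into $\cR$, rather than merely into some hyperfinite finite algebra. I would handle this by noting that a hyperfinite finite algebra with separable predual and a faithful normal tracial state embeds trace-preservingly into $\cR$. To see this, decompose the algebra into type $I_n$ parts and its type $II_1$ part; each summand embeds trace-preservingly into $\cR$ by Connes' uniqueness theorem and standard matrix embeddings, and the summands are then assembled via a partition of unity in $\cR$ with the prescribed traces.
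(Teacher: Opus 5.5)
Your proposal is correct and follows essentially the same route as the paper: the crossed product is hyperfinite by \cite[Theorem XV.3.16]{Tak3}, the Kruglov algebra is then hyperfinite by \cite[Theorem 32]{JSZ} and embeds into $\cR$ as in the proof of \cite[Theorem 5.2]{hrs}, and this embedding is composed with the isometry from the proof of Theorem~\ref{main}. You also require the embedding into $\cR$ to be trace-preserving, so that $\mu$ and hence the $E$-norm are preserved, and you sketch why such an embedding exists; this spells out a point the paper only cites.
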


 \begin{rem}
   If one consider the self-adjoint part of $L_p(\cM,\tau)$, then there exists a noncommutative probability space $\cN$ such that $L_p(\cM,\tau)_h$ is isometric to a subspace of   $E(\cN,\sigma)_h$, the self-adjoint part of $E(\cN,\sigma)$. 
  Indeed, in Lemma \ref{lemmav1}, one may simply replace the operator $v$ (and Lemma \ref{lemmav}) with $K_\cM$.  
   In particular, if $\cM=L_\infty (0,\infty)$, then $\cN$ can be chosen to be $L_\infty (0,1)$ (by the construction of the first case in the proof of Theorem \ref{main} with $v$ replaced by $K_{\cM_1}$), which recovers \cite{BD,BDK} and Theorem \ref{JMST} above; if $\cM$ is hyperfinite, then $\cN$ can be chosen to be hyperfinite. 

This result should be compared with the main result in \cite{Kalton_R}, which shows that all surjective isometries on a rearrangement-invariant space $E(0,1)\ne L_2(0,1)$ are elementary.
 \end{rem}

\begin{rem}\label{Remark}
The same argument used in \cite[p.211]{LT2} yields that, 
for any $0 <  p <\infty $ and $E(0,\infty)\supset L_{p,\infty}(0,\infty )$, the space $L_{p}(\cM,\tau)$ is isometric to a subspace of $E \left(\cM\bar{\otimes}  L_\infty (0,\infty ), \tau\otimes \int \right)$, where 
 $\cM$ is  a semifinite von Neumann algebra  equipped with a semifinite faithful normal trace $\tau$.
Indeed, letting $d_p(t):=t^{-1/p}$, $0<t<\infty $, we have 
   $$\mu\left(|x\otimes d_p|\right)= \mu\left(\mu(x)\otimes d_p|\right) \stackrel{\tiny \mbox{\cite[p.211]{LT2}}}{=}\norm{\mu(x)}_{L_{p}(0,\infty )} d_p  =  \left\|x\right\|_{L_p(\cM,\tau)}d_p,$$
   which  implies that 
$$\norm{x\otimes d_p}_{E(\cM\bar{\otimes} L_\infty (0,\infty ))} = \norm{\mu(|x\otimes d_p|)}_{E(0,\infty )}   =\norm{\norm{x}_{L_{p}(\cM,\tau)}d_p }_{E(0,\infty )} .$$
Hence, $x\mapsto \frac{1}{\norm{d_p}_{E(0,\infty )}  } x\otimes d_p$ is an isometry from $L_p(\cM)$ into $E(\cM \bar{\otimes}  L_\infty )$. 
\end{rem}

\begin{rem}
  The condition that 
  $E(0,1)\supset L_{p,\infty }(0,1)$ in Theorem \ref{main}
   provides a very simple criterion for   the existence of a  noncommutative probability space  $(\cN,\sigma)$ such that $L_{p}(\cM,\tau)$ isometrically embedding into $E(\cN,\sigma)$.
   However, it is not a necessary condition even in the commutative setting.
  For example, since $L_p(0,\infty )$ (and any $L_p(\cM,\tau)$ affiliated with a semifinite von Neumann algebra $(\cM,\tau)$ on a separable Hilbert space, see e.g. \cite{Me}, \cite[Proposition 1.2]{S00} and \cite{DPS}), $1\le p<\infty$,  is separable, it follows from \cite[Theorem 2.5.7]{AK} that $L_p(0,\infty )$ (and $L_p(\cM,\tau)$) embeds isometrically into $\ell_\infty$ and therefore, into $L_\infty (0,1 )$. 
  However, $L_\infty (0,1)\not\supset L_{p,\infty }(0,1)$.
\end{rem}

\appendix
\section{The Kruglov operator on noncommutative $L_p$-space, $0<p<1$}
In \cite{JSZ}, the Kruglov operator was defined in the setting of noncommutative  $L_p$-space, $1\le p<\infty$. 
We extend Lemmas 35 to 38 from \cite{JSZ} to the setting of $0< p<1$. 
%The main tool is the so-called \emph{uniform submajorization} introduced in \cite{KS}:
%for $x,y\in S(\cM,\tau)$, $y$ is said to be uniformly majorized by $x$ (written $y \vartriangleleft x$) if there exists $\lambda \in \mathbb{N}$ such that
%$$\int_{\lambda a}^b \mu(s;y)ds \le \int_{a}^b \mu(s;x)ds ,~\forall ~ 0\le \lambda a \le b. $$
  If $\tau>0,$ the dilation operator $\sigma_{\tau}$ is defined by setting 
  $D_{\tau}x(s)=x(s/{\tau}),$ $s>0,$ in the case of the semi-axis. In the case of the interval $(0,1),$ the operator $D_{\tau}$ is defined by
 $$
 D_{\tau}x(s)=
 \begin{cases}
 x(s/\tau),& s\leq\min\{1,\tau\}\\
 0,& \tau<s\leq1.
 \end{cases}
 $$

Throughout this section, we always assume that $0<p<1$. Note that $\norm{\cdot}_{L_p}$ is not monotone with respect to the Hardy--Littlewood--Polya submajorization when $0<p<1$\cite[Lemma 25]{ASZ}. 

The following lemma is an analogue of \cite[Lemma 35]{JSZ}.
\begin{lem}\label{A35}
  For every $x\in   L_\infty (0,1)$, we have 
  $$\mu(Kx)\le   \mu\left( \bigoplus_{n=1}^\infty n D_{\frac{1}{e\cdot (n-1)!}} \mu(x) \right) . $$
\end{lem}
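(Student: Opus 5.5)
The inequality $\mu(Kx)\le\mu(\cdots)$ is pointwise domination of decreasing rearrangements, so I will prove it by finding a function that dominates $|Kx|$ pointwise and has exactly the right-hand side as its rearrangement.

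First I reduce to $x=\mu(x)\ge0$. The Kruglov operator is positive, so $|Kx|\le K|x|$. By property (1) of $K$, $K|x|$ is equimeasurable with $K\mu(x)$, since $|x|$ and $\mu(x)$ are equimeasurable. Hence $\mu(Kx)\le\mu(K\mu(x))$, and it suffices to treat $0\le x=\mu(x)\in L_\infty(0,1)$, a decreasing function.

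Next I split $Kx$ according to the disjoint sets $A_k\times\Omega'$. By the defining formula,
$$Kx=\sum_{k\ge1}\chi_{A_k}\otimes\Big(\sum_{m=1}^k x_m\Big),$$
where $x_m$ denotes $x$ placed in the $m$-th coordinate. Each summand $x_m$ is equimeasurable with $x$ and satisfies $x_m\le x(0^+)$-type bounds coordinatewise. The key elementary estimate I will use is the crude bound
$$\sum_{m=1}^k x_m\le k\max_{1\le m\le k}x_m.$$
Using it, I aim to show that on $A_k\times\Omega'$ the function $\sum_{m\le k}x_m$ is dominated in distribution by $k\,x$ placed on a set of measure $m(A_k)$. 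Then $Kx$ has the same distribution as a disjoint sum over $k$ of functions whose rearrangements are at most $k D_{m(A_k)}\mu(x)$ (up to the index shift $n=k+1$ with $m(A_{n-1})=1/(e\,(n-1)!)$, which matches the stated formula). Since rearrangements of disjoint sums are rearrangements of direct sums, and the dominations hold blockwise, this gives
$$\mu(Kx)\le\mu\Big(\bigoplus_n nD_{\frac1{e(n-1)!}}\mu(x)\Big).$$

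The main obstacle is that blockwise step. The issue is that $\max_{m\le k}x_m$ is not equimeasurable with $x$: its distribution is $1-(1-d_x(s))^k$, which can be as large as $k\,d_x(s)$. So the naive domination of $\sum_{m\le k}x_m$ by $k\max_m x_m$ only controls the block by something like $k\,D_k\mu(x)$, not $k\,\mu(x)$. I would first try to check whether the intended dilation $D_{1/(e(n-1)!)}$ already absorbs such a factor through the index shift between $k$ and $n$. If it does not, I would replace the max bound with the finer decomposition $\sum_m x_m\le\sum_m x_m\chi_{\{x_m\ge x_j\ \forall j\}}\cdot k$, which keeps each piece on a set of the correct measure. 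In either case the rest is bookkeeping with disjoint supports, so this blockwise distributional estimate is where the work lies, and it should mirror the argument of \cite[Lemma 35]{JSZ}.
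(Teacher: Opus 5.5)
Your plan has a genuine gap at the step you flag, and it comes from misreading the dilation in the statement. You set out to prove that $\sum_{m\le k}x_m$ is dominated in distribution by $kx$, so that block $k$ is controlled by $kD_{m(A_k)}\mu(x)$. That claim is false. For $x=\chi_{(0,1/2)}$ and $k=2$, the function $x_1+x_2$ equals $2$ on a set of measure $1/4$ and $1$ on a set of measure $1/2$, so $\mu(3/5;x_1+x_2)=1>0=2\mu(3/5;x)$.

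Your fallback does not help. If $S_1,\dots,S_k$ are disjoint sets covering everything with $x_m=\max_j x_j$ on $S_m$, then $\sum_m x_m\chi_{S_m}=\max_m x_m$. This has the same distribution $1-(1-d_x)^k$ as before, so the pieces do not jointly sit on sets of the ``correct measure''. The index shift $n=k+1$ is also spurious: the right matching keeps $n=k$.

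The missing observation is that your ``obstacle'' is harmless, because
$\frac{1}{e\cdot(n-1)!}=n\cdot\frac{1}{e\cdot n!}=n\,m(A_n)$. Hence $nD_{\frac{1}{e(n-1)!}}\mu(x)=nD_{m(A_n)}D_n\mu(x)$. On the block $A_n$ all you need is $\mu(t;\sum_{m=1}^n x_m)\le n\mu(t/n;x)$. Your crude bound gives exactly this: $d_{\max_m x_m}\le n\,d_x$ yields $\mu(\max_m x_m)\le D_n\mu(x)$.

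The paper gets the same bound directly, without reducing to $x\ge 0$, from $\mu(t_1+\dots+t_n;a_1+\dots+a_n)\le\sum_m\mu(t_m;a_m)$ with $t_m=t/n$ \cite[Lemma 2.3.16(a)]{LSZ}. Then $\mu(\chi_{A_n}\otimes\sum_{m\le n}x_m)=D_{m(A_n)}\mu(\sum_{m\le n}x_m)\le nD_{\frac{1}{e(n-1)!}}\mu(x)$, and your disjoint-support bookkeeping finishes the proof.
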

\begin{proof} 
Recall that 
$$Kx=\sum_{n=1}^\infty  \chi_{_{A_n}} \otimes  \sum_{m=1}^n \chi_{(0,1)}^{\otimes (m-1)} \otimes x \otimes \chi_{(0,1)}^{\otimes \infty 
}, $$
where $A_n$, $n\ge 0$, are pairwise disjoint sets with $m(A_n) = \frac{1}{e\cdot n!} $ for all $n\ge 0$. 

It follows from \cite[Lemma 2.3.16(a)]{LSZ} that 
$$\mu\left(t;\sum_{m=1}^n 1^{\otimes (m-1)} \otimes x\otimes {\bf 1}^{\otimes \infty } \right)
\le n\mu\left(
\frac{t}{n}; x\right), ~ \forall n\ge 1, ~t>0,$$
and, 
therefore, 
$$\mu\left(t; 
\chi_{A_n}\otimes \left(\sum_{m=1}^n 1^{\otimes (m-1)} \otimes x\otimes {\bf 1}^{\otimes \infty }    \right)  
\right)
\le  
  nD_{m(A_n)}  \mu\left(\frac{t}{n};x\right)  =n D_{\frac{1}{e\cdot (n-1)!}}\mu(x)  .  $$
Hence, we have 
$$\mu(Kx)   \le  \mu\left( \bigoplus_{n=1}^\infty n D_{\frac{1}{e\cdot (n-1)!}}\mu(x)  \right) .$$ 
This completes the proof. 
\end{proof}

\begin{lem}\label{A36}
  If $0<p<1$ and $x_k \in  L_\infty (0,1)$, $1\le k \le n$, then 
  $$\norm{\oplus _{k=0}^\infty Kx_k }_{(L_p+L_2)(0,\infty)} \sim \norm{\oplus _{k=0}^\infty  x_k }_{(L_p+L_2)(0,\infty)} .$$
\end{lem}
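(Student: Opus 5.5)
We prove the two inequalities separately, using that the $(L_p+L_2)(0,\infty)$ quasi-norm of a decreasing function $f=\mu(f)$ is equivalent to the $K$-functional at $t=1$. By Holmstedt's formula for the couple $(L_p,L_2)$, with $1/\alpha=1/p-1/2$,
\[
\norm{f}_{L_p+L_2}\sim \Big(\int_0^{1}\mu(s;f)^p\,ds\Big)^{1/p}+\Big(\int_1^\infty \mu(s;f)^2\,ds\Big)^{1/2}.
\]
This only uses the quasi-Banach version of Holmstedt's theorem, which holds for $0<p<1$. Thus it suffices to compare the ``head'' (the $L_p$ part on $[0,1]$) and the ``tail'' (the $L_2$ part beyond $1$) of $\mu(\oplus_k Kx_k)$ and $\mu(\oplus_k x_k)$. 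Throughout we write $X=\oplus_k x_k$ and $Y=\oplus_k Kx_k$.

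\emph{Upper bound.} We first apply Lemma~\ref{A35} to each $x_k$. This dominates $\mu(Y)$ by $\mu\big(\bigoplus_k\bigoplus_{n\ge1} nD_{\frac{1}{e(n-1)!}}\mu(x_k)\big)$. Regrouping over $n$, this equals $\mu\big(\bigoplus_{n} nD_{\frac1{e(n-1)!}}\mu(X)\big)$, since $D_s\mu(x_k)$ and $D_s$ of the pieces of $\mu(X)$ are equimeasurable. Next, $(L_p+L_2)(0,\infty)$ carries an equivalent $q$-norm with $q=p$, by the Aoki--Rolewicz theorem; indeed $\norm{\cdot}_{L_p}^p$ is subadditive and $\norm{\cdot}_{L_2}$ is a norm, so $q=p$ works directly. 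Hence
\[
\norm{Y}^p\lesssim \sum_n n^p\,\norm{D_{\frac1{e(n-1)!}}\mu(X)}_{L_p+L_2}^p .
\]
The dilation $D_s$ with $s\le1$ has norm at most $s^{1/p}$ on $L_p$ and $s^{1/2}$ on $L_2$, so its norm on the sum is at most $s^{1/2}$. The resulting series $\sum_n n^p\,(e(n-1)!)^{-p/2}$ converges, which gives $\norm{Y}\lesssim\norm{X}$.

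\emph{Lower bound.} This is the main obstacle, because $K$ is not a contraction in reverse and submajorization arguments fail for $p<1$. We use the $n=1$ summand of $Kx_k$, namely $\chi_{A_1}\otimes x_k\otimes \chi^{\otimes\infty}$. We cannot simply drop the other summands, since they live on disjoint sets $A_n$ but the operator is a sum of positive and negative parts. However, the summands are disjointly supported in the first coordinate, so $\mu(Kx_k)\ge \mu(\chi_{A_1}\otimes x_k)=D_{1/e}\mu(x_k)$ holds. Consequently $\mu(Y)\ge D_{1/e}\mu(X)$. Since $D_{1/e}$ is invertible on $L_p+L_2$, with $D_e$ bounded by a constant of order $e^{1/p}$, we obtain $\norm{X}\lesssim\norm{Y}$.

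The remaining technicality is the infinite direct sum. We handle it by first proving both bounds for finitely many $x_k$, with constants independent of their number, and then passing to the limit by the Fatou property of $L_p+L_2$.
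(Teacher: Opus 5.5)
Your proof is correct and is essentially the paper's argument. Lemma~\ref{A35}, regrouping over $n$, and summability of the dilated pieces give the upper bound, while the $A_1$-summand gives $\mu(Kx_k)\ge D_{1/e}\mu(x_k)$ and the bounded inverse dilation $D_e$ gives the lower bound. The differences are cosmetic: you use the $p$-norm property of $\norm{\cdot}_{L_p+L_2}$ instead of iterating the modulus of concavity $C_p^n$, and the Holmstedt formula you set up is never actually used. Your estimate $\norm{D_s}_{L_p+L_2\to L_p+L_2}\le s^{1/2}$ is in fact the right one; the paper's $u^{1/p}$ is too optimistic for functions living mostly in the $L_2$ part, though the series converges either way.
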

\begin{proof}

  By Lemma \ref{A35}, we have 
  $$\mu\left( \bigoplus _{k=0}^\infty Kx_k \right)   \le  \mu\left(  \bigoplus _{k=0}^\infty \left(\bigoplus _{n=1}^\infty n D_{\frac{1}{e\cdot (n-1)!}} \mu(x_k)  \right)  \right)= \mu\left( \bigoplus_{n=1}^\infty n\left( \bigoplus _{k=0}^\infty   D_{\frac{1}{e\cdot (n-1)!}} \mu(x_k )  \right) \right) . $$
  %Since the quasi-norm $\norm{\cdot}_{(L_p+L_2)(0,\infty)}$ is monotone with respect to uniform submajorization (up to a constant\cite[Corollary 14]{S14}), i
  It follows from the quasi-triangular inequality of $\norm{\cdot}_{(L_p+L_2)(0,\infty)}$ that 
  \begin{align*}
&~\quad   \norm{\bigoplus _{k=0}^\infty Kx_k}_{(L_p+L_2)(0,\infty)} \\&\le  \sum_{n=1}^\infty n C_p^n    \norm{ \bigoplus _{k=0}^\infty  D_{\frac{1}{e\cdot (n-1)!}} \mu(x_k) }_{(L_p+L_2)(0,\infty)} \\ 
  &  = \sum_{n=1}^\infty n C_p^n  \norm{  D_{\frac{1}{e\cdot (n-1) !}} \left( \bigoplus _{k=0}^\infty \mu(x_k)\right) }_{(L_p+L_2)(0,\infty)} \\
  &\le  \left(     \sum_{n=1}^\infty n C_p^n \norm{D_{\frac{1}{e\cdot (n-1) !}}}_{L_p+L_2 \to L_p+L_2} \right)  \norm{  \bigoplus _{k=0}^\infty x_k   }_{(L_p+L_2)(0,\infty)}  ,
  \end{align*}
  where $C_p$ stands for the modulus of concavity of the quasi-norm $\norm{\cdot}_{L_p+L_2}$. 
Taking into account that 
$$\norm{D_u}_{L_p+L_2 \to L_p+L_2} \le u^{1/p}, ~0<u\le 1,$$
we infer that 
$$\left(     \sum_{n=1}^\infty n C_p^n \norm{D_{\frac{1}{e\cdot n!}}}_{L_p+L_2 \to L_p+L_2} \right)
\le \sum_{n=1}^\infty \frac{n C_p^n }{(e\cdot (n-1) !)^{1/p}}   <\infty. 
$$
Therefore, there exists a constant $  {\rm Const}<\infty $
such that 
$$ \norm{\bigoplus _{k=0}^\infty Kx_k}_{(L_p+L_2)(0,\infty)}  \le  {\rm Const} \cdot \norm{  \bigoplus _{k=0}^\infty x_k   }_{(L_p+L_2)(0,\infty)}   . $$

On the other hand, by the definition of $K$, we have 
$$\mu(Kx_k )\ge \mu(\chi_{_{A_1}} \otimes x_k \otimes {\bf 1}^\infty ) =D_{m(A_1)} \mu(x_k)=D_{1/e}\mu(x_k), ~1\le k\le n.$$
Observing that 
$$\norm{D_u x}_{(L_p+L_2)(0,\infty)} \ge u^{1/p} \norm{x}_{(L_p+L_2)(0,\infty)}, ~\forall x\in (L_p+L_2)(0,\infty ), ~0<u\le 1, $$
we conclude that 
\begin{align*}
\norm{\oplus _{k=0}^\infty K  x_k }_{(L_p+L_2)(0,\infty)} &\ge \norm{\oplus _{k=0}^\infty  D_{1/e}\mu(x_k)  }_{(L_p+L_2)(0,\infty)}  \\
& =\norm{D_{1/e}\left( \oplus _{k=0}^\infty  \mu(x_k)\right)   }_{(L_p+L_2)(0,\infty)}\\
&\ge  \left(\frac1e\right)^{\frac1p}\norm{  \oplus _{k=0}^\infty  \mu(x_k)   }_{(L_p+L_2)(0,\infty)} . 
\end{align*}
This completes the proof. 
\end{proof}
%We denote by $\norm{\cdot}_{\mathcal{L}_p+\mathcal{L}_2}$ the standard quasi-norm on $(L_p+L_2)(\cR\bar{\otimes}B(\cH) )$. 
Recall that the Johnson--Schechtman inequality holds in the setting of $L_p$-space, $0<p<\infty$\cite{JS89}. The  argument in \cite[Lemma 37]{JSZ} yields the following.
\begin{lem}\label{A37}
Let $(\cM,\tau)$ be an atomless noncommutative probability space.
  Let $0<p\le 2$ and let $x_k\in \cM$, $1\le k\le n$, be independent symmetrically distributed random variables, in the sense of the definition given in  \cite[Section 2.8]{JSZ}. 
  Then, 
  $$\norm{\sum_{k=1}^n x_k}_{L_p(\cM,\tau )}\sim \norm{\bigoplus_{k=1}^n  \mu(x_k)}_{(L_p+L_2)(0,\infty )}.$$
\end{lem}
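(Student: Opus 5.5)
The plan is to follow \cite[Lemma 37]{JSZ} and replace the convexity/duality tools that fail for $0<p<1$ by the Johnson--Schechtman inequality, which holds for all $0<p<\infty$ \cite{JS89}. The argument has three steps.

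\textbf{Step 1 (reduction to a commutative, independent model).} Since the $x_k$ are independent in the sense of \cite[Section 2.8]{JSZ}, the joint distribution of $(x_1,\dots,x_n)$ is the same as that of commuting independent copies. Concretely, the von Neumann algebras generated by the $x_k$ sit in a tensor-product position, and the trace factorizes on them. This lets us realize $\sum_k x_k$ inside a tensor product $\bar{\otimes}_k \cM_k$, each $\cM_k$ being a copy of the algebra generated by $x_k$.

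Each $x_k$ is self-adjoint and symmetrically distributed, so it is equimeasurable with a symmetric function $f_k\in L_\infty(0,1)$. Independence then gives
\[\sigma\Big(\exp\Big(it\sum_k x_k\Big)\Big)=\prod_k \int \exp(itf_k),\]
where the $f_k$ are taken as independent functions on $(0,1)^n$. Hence $\sum_k x_k$ and $\sum_k f_k$ have the same distribution, and $\norm{\sum_k x_k}_{L_p(\cM)}=\norm{\sum_k f_k}_{L_p(0,1)}$. We also have $\mu(x_k)=\mu(f_k)$.

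\textbf{Step 2 (Johnson--Schechtman inequality).} For independent symmetric (hence mean-zero) random variables and $0<p\le 2$, the Johnson--Schechtman inequality \cite{JS89} gives
\[\norm{\sum_k f_k}_{L_p}\sim \norm{\bigoplus_k \mu(f_k)}_{(L_p+L_2)(0,\infty)},\]
with constants depending only on $p$. For $p\ge1$ this is the classical statement. For $0<p<1$ it is exactly the extension recorded just before the lemma, which we may assume.

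\textbf{Step 3 (conclusion).} Combining Steps 1 and 2 gives the claim. The one thing to verify is that "independent symmetrically distributed" in the sense of \cite[Section 2.8]{JSZ} yields the factorization of the joint characteristic function for commuting self-adjoint elements. If the $x_k$ do not commute, Step 1 still holds, because independence in \cite{JSZ} means the generated subalgebras are tensor independent, and a tensor product of commutative algebras is commutative. The atomless hypothesis on $\cM$ is used only to ensure the relevant spaces are realized on $(0,1)$.

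The main obstacle is Step 1: justifying that the noncommutative independence notion of \cite{JSZ} transfers the $L_p$ quasi-norm of the sum to a commutative model. For $0<p<1$ one cannot fall back on duality or on submajorization, since $\norm{\cdot}_{L_p}$ is not monotone with respect to submajorization \cite[Lemma 25]{ASZ}. So the argument must rely on exact equality of distributions, which is why I would prove the identity of characteristic functions first and then invoke the Carleman/Fourier uniqueness of distributions.
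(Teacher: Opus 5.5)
Your proposal is correct and follows essentially the paper's route. The paper just invokes the argument of \cite[Lemma 37]{JSZ}: independence in the sense of \cite[Section 2.8]{JSZ} (commuting generated algebras with a factorizing trace) lets one pass to a classical model of independent symmetric random variables, and the commutative Johnson--Schechtman inequality, valid for all $0<p<\infty$ \cite{JS89}, then finishes. Your characteristic-function identity is one concrete way to make that passage.

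The only blemish is your aside about non-commuting $x_k$. It is vacuous, since commutation of the generated algebras is part of that definition, and it is genuinely needed for $\exp(it\sum_k x_k)=\prod_k \exp(itx_k)$.
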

Recall that $K_\cM$ is bounded from $(L_1\cap L_2 )(\cM,\tau) $ into $L_2(\cN,\sigma)$. 
The following proposition shows that $K_{\cM\bar{\otimes} L_\infty (0,1)}(\cdot \otimes r )$ is bounded from $L_2(\cM,\tau)$($=(L_2+L_2)(\cM,\tau)$) into $L_2(\cN_{\cM\bar{\otimes}L_\infty(0,1)},\sigma_{\cM\bar{\otimes}L_\infty(0,1)})$.
The argument used in the proof of \cite[Lemma 38]{JSZ},  with \cite[Lemma 36]{JSZ}   (respectively, \cite[Lemma 37]{JSZ}) replaced by Lemma \ref{A36} (respectively, Lemma \ref{A37}),  yields the following proposition. For the sake of completeness, we include a full proof below.

\begin{prop}\label{propbounded} Let $\cM$ be an atomless   von Neumann algebra on a separable Hilbert space, equipped with a semifinite faithful normal trace $\tau$.  
For $0<  p\le 2$ and $x\in L_p(\cM,\tau)$ whose   right support  is  $\tau$-finite, we have 
$$\norm{K_{\cM\bar{\otimes} L_\infty (0,1)}(x\otimes r ) }_{L_p(\cN_{\cM\bar{\otimes} L_\infty (0,1)},\sigma_{\cM\bar{\otimes} L_\infty (0,1)})}  \sim \norm{x}_{(L_p +L_2) (\cM,\tau)}. $$
Consequently, the mapping $x\mapsto K_{\cM\bar{\otimes} L_\infty (0,1)}   (x\otimes r) $ extends to an isomorphic embedding of $(L_p +L_2) (\cM,\tau)$ into a noncommutative $L_p$-space over a noncommutative probablity space $\left(
\cN_{\cM\bar{\otimes} L_\infty (0,1)},\sigma_{ \cM\bar{\otimes} L_\infty (0,1) }\right)$.
\end{prop}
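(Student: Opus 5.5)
The plan is to follow \cite[Lemma 38]{JSZ}: realize $K_{\cM\bar{\otimes}L_\infty(0,1)}(x\otimes r)$ as a sum of independent symmetrically distributed pieces, estimate its $L_p$-quasi-norm by Lemma \ref{A37}, and then compare the resulting $(L_p+L_2)$-quantity with $\norm{x}_{(L_p+L_2)(\cM,\tau)}$ via Lemma \ref{A36}.

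Let $e$ be the ($\tau$-finite) right support of $x$. By property (4) of Section \ref{S:k}, we may replace $\cM$ by $f\cM f$ for a $\tau$-finite projection $f\ge e\vee\mathfrak l(x)$, chosen so that $(f\cM f,\tau)$ is finite and atomless. After normalising the trace, we work over a finite algebra.

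\textbf{Step 1: reduction to finitely valued $x$.} I would first treat $x$ with finite spectrum in the polar sense. Write $|x|=\sum_{k=1}^n \lambda_k e_k$ with pairwise orthogonal projections $e_k$, and set $x_k=u\lambda_k e_k$. Then $x\otimes r=\sum_k x_k\otimes r$, and the summands have mutually orthogonal right supports $e_k\otimes 1$. Following \cite[Section 2.8]{JSZ}, the variables $K(x_k\otimes r)$ coming from orthogonally supported pieces should be independent. They are symmetrically distributed because $r\sim -r$: the symmetry $t\mapsto 1-t$ of $(0,1)$ maps $x_k\otimes r$ to $-x_k\otimes r$, and this symmetry is intertwined by $K$ by property (1). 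Non-self-adjointness will be handled as in \cite{JSZ}, using the $2\times 2$ matrix trick.

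\textbf{Step 2: the two-sided estimate.} Lemma \ref{A37} then gives
\[
\norm{K(x\otimes r)}_{L_p}\sim\norm{\textstyle\bigoplus_k\mu(K(x_k\otimes r))}_{(L_p+L_2)(0,\infty)}.
\]
By property (1), each $K(x_k\otimes r)$ is equimeasurable with the commutative object $K(\mu(x_k)\otimes r)$ (after rescaling the trace). Lemma \ref{A36}, applied blockwise with $r$ absorbed, since $\mu(\mu(x_k)\otimes r)=\mu(x_k)$, converts this to
\[
\norm{\textstyle\bigoplus_k\mu(x_k)}_{(L_p+L_2)(0,\infty)}=\norm{x}_{(L_p+L_2)(\cM,\tau)}.
\]
The last equality holds because the pieces are disjointly supported, so $\mu(x)=\mu(\bigoplus_k\mu(x_k))$.

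\textbf{Step 3: approximation and extension.} For general $x$ with $\tau$-finite support, approximate by step operators $x^{(m)}$ as in the proof of Lemma \ref{lemmav}. Convergence holds in $L_1$ on the finite corner, so $K(x^{(m)}\otimes r)\to K(x\otimes r)$ in $L_1$, hence in measure. The equivalence passes to the limit because the approximations form a Cauchy sequence in $(L_p+L_2)$ and hence, by the uniform constants, in $L_p$. The set of operators with $\tau$-finite support is dense in the separable quasi-Banach space $(L_p+L_2)(\cM,\tau)$, which gives the extension to an isomorphic embedding.

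I expect the main obstacle to be the justification in Step 1. One needs that orthogonally supported inputs give independent outputs in the sense of \cite{JSZ}, and that Lemma \ref{A37} (the Johnson--Schechtman inequality for $0<p<1$) transfers to the noncommutative setting. I would try to reduce this to the commutative case, observing that for self-adjoint commuting $x_k$ everything lives in an abelian subalgebra where \cite{JS89} applies directly. For a non-self-adjoint $x$ I would use the off-diagonal embedding $x\mapsto x\otimes E_{01}+x^*\otimes E_{10}$, which preserves $\mu$ up to multiplicity two.
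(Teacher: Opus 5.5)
Your skeleton matches the paper's: decompose into orthogonal pieces, get independence and symmetry from \cite[Corollary 33]{JSZ}, apply Lemma \ref{A37}, pass to a commutative $Ky_k$ via property (1), and finish with Lemma \ref{A36}. Two steps, however, fail as written.

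\textbf{First gap: non-self-adjoint $x$.} Your pieces $x_k=u\lambda_k e_k$ are not self-adjoint, and orthogonal right supports do not give $x_jx_k=0$. The independence statement \cite[Corollary 33(v)]{JSZ}, the comparison with $Ky$ in property (1), and Lemma \ref{A37} all concern self-adjoint, mutually annihilating pieces. Your remedy $\tilde x=x\otimes E_{01}+x^*\otimes E_{10}$ only yields an estimate for $K_{\cM\bar{\otimes}\mathbb{M}_2\bar{\otimes}L_\infty(0,1)}(\tilde x\otimes r)$. That operator lives in a different algebra and comes from a different Kruglov operator: the one for $\cM\bar{\otimes}\mathbb{M}_2$ is not $K_{\cM}\otimes{\rm id}$, as Lemmas \ref{mixed momenta formula} and \ref{lemmagood} already show, since only ``good'' partitions survive for the latter. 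You give no way to transfer the estimate back to $K_{\cM\bar{\otimes}L_\infty(0,1)}(x\otimes r)$.

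The missing idea is much simpler. $K$ is positive, hence commutes with the adjoint, so $\Re K(x\otimes r)=K(\Re x\otimes r)$ and $\Im K(x\otimes r)=K(\Im x\otimes r)$. The quasi-triangle inequality then gives $\norm{K(x\otimes r)}_{L_p}\sim\norm{K(\Re x\otimes r)}_{L_p}+\norm{K(\Im x\otimes r)}_{L_p}$ and $\norm{x}_{L_p+L_2}\sim\norm{\Re x}_{L_p+L_2}+\norm{\Im x}_{L_p+L_2}$. Moreover $\Re x$ and $\Im x$ have $\tau$-finite support, because $\mathfrak{l}(x)$ and $\mathfrak{r}(x)$ are equivalent. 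This is exactly how the paper reduces to the self-adjoint case.

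\textbf{Second gap: rescaling the trace.} The Kruglov operator depends on $\tau$ through $\sigma(\exp(iKx))=\exp(\tau(e^{ix}-1))$. Replacing $\tau$ by $\tau/c$ changes the Poisson intensity, and hence the distribution of $Kx$. It therefore produces a different operator, changes both sides of the claimed equivalence, and is not covered by property (4).

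What you actually need, in order to invoke the comparison with $Ky_k$, $y_k\in L_\infty(0,1)$, in property (1) and in Lemma \ref{A36}, is that every piece has support of trace at most $1$. This is where the atomless hypothesis enters. For self-adjoint step $x$, split each spectral projection into subprojections of trace $\le 1$. These commute with $x$, so the resulting pieces remain self-adjoint and mutually annihilating.

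Finally, for $0<p<1$ an element of $L_p$ with $\tau$-finite support need not lie in $L_1$ (take $\mu(t;x)=t^{-1}\chi_{(0,1)}(t)$). So the $L_1$-convergence claimed in your Step 3 fails there, and for such $x$ the operator $K(x\otimes r)$ has to be defined by the continuous extension itself.
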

\begin{proof}
Let  $K_{\cM\bar{\otimes} L_\infty (0,1)}(x\otimes r )$  be the Kruglov operator  from  $L_1(\cM\bar{\otimes} L_\infty (0,1) ,\tau\otimes \int)$ into $L_1(\cN_{\cM\bar{\otimes} L_\infty (0,1)},\sigma_{\cM\bar{\otimes} L_\infty (0,1)})$ for some finite von Neumann algebra $\cN_{\cM\bar{\otimes} L_\infty (0,1)}$ equipped with a faithful normal tracial state $\sigma_{ \cM\bar{\otimes} L_\infty (0,1) }$.

(1) Assume that $x$ is a self-adjoint operator whose  right supports are $\tau$-finite. 
One may select $x_k=x^*_k \in \cM $, $1\le k \le n $, such that $x_k x_{j}=0$ for $k\ne j $, $x=\sum_{k=1}^n x_k$ and $(\tau\otimes \int)  ({\rm supp} x_k)\le 1$ for every $k$. 
Since the random variables $x_k \otimes r$, $1\le k \le n$, are symmetrically distributed, it follows from \cite[Corollary 33 (v) and (iv)]{JSZ} that the random variables $K_{\cM\bar{\otimes} L_\infty (0,1)} (x_k\otimes r )$, $1\le k \le n $, are independent and symmetrically distributed. 
By Lemma \ref{A37} above, we have 
\begin{align*}
&~\quad \norm{K_{\cM\bar{\otimes} L_\infty (0,1)} (x \otimes r )}_{L_p(\cN_{\cM\bar{\otimes} L_\infty (0,1)},\sigma_{\cM\bar{\otimes} L_\infty (0,1)})} \\
&=\norm{\sum_{k=1}^n K_{\cM\bar{\otimes} L_\infty (0,1)} (x_k\otimes r )}_{L_p(\cN_{\cM\bar{\otimes} L_\infty (0,1)},\sigma_{\cM\bar{\otimes} L_\infty (0,1)})}\\
&\sim \norm{\bigoplus_{k=1}^n \mu\left(K_{\cM\bar{\otimes} L_\infty (0,1)} (x_k\otimes r )\right)}_{(L_p+L_2)(0,\infty )}. 
\end{align*}
For every $1\le k \le n$, select a function $y_k \in L_\infty (0,1)$ which is equi-measurable with $x_k \otimes r $. 
By \cite[Corollary 33(i)]{JSZ}, we have that $K_{\cM\bar{\otimes} L_\infty (0,1)} (x_k\otimes r )$ is equimeasurable with $Ky_k$. 
By Lemma \ref{A36}, we have 
\begin{align*}
\norm{\bigoplus_{k=1}^n \mu\left(K_{\cM\bar{\otimes} L_\infty (0,1)} (x_k\otimes r )\right)}_{(L_p+L_2)(0,\infty )}
&=\norm{\oplus_{k=1}^n Ky_k  }_{(L_p+L_2)(0,\infty )}\\
 &\stackrel{L.\ref{A36}}{\sim} \norm{\bigoplus _{k=1}^n y_k}_{(L_p+L_2)(0,\infty )}  \\
&~=  \norm{\bigoplus _{k=1}^n \mu( x_k)}_{(L_p+L_2)(0,\infty )}  . 
\end{align*}
Hence, we have 
\begin{align*}
\norm{  K_{\cM\bar{\otimes} L_\infty (0,1)}  (x_k\otimes r) }_{L_p(\cN_{\cM\bar{\otimes} L_\infty (0,1)},\sigma_{\cM\bar{\otimes} L_\infty (0,1)})} 
&  \sim  \norm{\bigoplus _{k=1}^n \mu( x_k)}_{(L_p+L_2)(0,\infty )} \\
& =\norm{x}_{(L_p+L_2)(\cM,\tau)}. 
\end{align*}

(2)
For a not necessarily self-adjoint finitely supported operator $x\in  L_p(\cM,\tau)$, we have 
\begin{align*}
 \norm{x}_{ (L_p+L_2)(\cM,\tau)}&\sim \norm{\Re x}_{ (L_p+L_2)(\cM,\tau)}+\norm{\Im x}_{ (L_p+L_2)(\cM,\tau)}\\&\sim \norm{K_{\cM\otimes L_\infty (0,1)}(\Re x \otimes r )}_{_{L_p\left(\cN_{\cM\bar{\otimes} L_\infty (0,1)},\sigma_{\cM\bar{\otimes} L_\infty (0,1)}\right)}}  \\
 &\qquad +\norm{K_{\cM\otimes L_\infty (0,1)} (\Im  x \otimes r )}_{_{L_p\left(\cN_{\cM\bar{\otimes} L_\infty (0,1)},\sigma_{\cM\bar{\otimes} L_\infty (0,1)}\right)}} \\
& \sim 
\norm{K_{\cM\otimes L_\infty (0,1)}(  x \otimes r )}_{_{L_p\left(\cN_{\cM\bar{\otimes} L_\infty (0,1)},\sigma_{\cM\bar{\otimes} L_\infty (0,1)}\right)}} . 
 \end{align*}
 This completes the proof. 
  \end{proof}

\end{document}